\documentclass{amsart}
\usepackage{amsfonts,amssymb,euscript,mathrsfs,color}
\usepackage[all]{xy}

\usepackage{hyperref}

\numberwithin{equation}{section}

\newtheorem{theorem}{Theorem}[section]

\newtheorem{lemma}[theorem]{Lemma}
\newtheorem{proposition}[theorem]{Proposition}

\theoremstyle{definition}

\theoremstyle{remark}

\providecommand{\abs}[1]{\left\vert#1\right\vert}
\providecommand{\babs}[1]{\big\vert#1\big\vert}

\providecommand{\nm}[1]{\left\Vert#1\right\Vert}

\providecommand{\bnm}[1]{\big\Vert#1\big\Vert}

\def\dt{\partial_t}
\def\p{\partial}
\def\ls{\lesssim}
\def\rt{\rightarrow}
\def\r{\mathbb{R}}
\def\no{\nonumber}

\def\ui{\mathrm{i}}
\newcommand{\eps}{\varepsilon}

\def\R{\mathbb{R}}
\def\C{\mathbb{C}}
\def\Z{\mathbb{Z}}

\def\z{\mathbb{Z}}
\def\s{\mathscr{S}}

\def\D{\Delta}

\def\tr{\text{tr}}
\def\j{\mathfrak{I}}

\def\ah{\alpha}
\def\bh{\beta}

\def\D{\Delta}
\def\DD{\Delta_{{\rm d}}}

\def\ph{\psi^h}

\def\ep{\varepsilon}

\def\l{\ell}

\def\pe{\psi}

\newcommand{\qtq}[1]{\quad\text{#1}\quad}
\let\Re=\undefined
\DeclareMathOperator{\Re}{Re}
\let\Im=\undefined
\DeclareMathOperator{\Im}{Im}
\DeclareMathOperator{\supp}{supp}
\DeclareMathOperator{\sign}{sign}

\allowdisplaybreaks

\begin{document}

\title{Continuum Limit for the Ablowitz--Ladik System}

\author{Rowan Killip}
\address{Department of Mathematics, University of California, Los Angeles, CA 90095, USA}
\email{killip@math.ucla.edu}

\author{Zhimeng Ouyang}
\address{Institute for Pure and Applied Mathematics, University of California, Los Angeles, CA 90095, USA}
\email{zhimeng\_ouyang@alumni.brown.edu}

\author{Monica Visan}
\address{Department of Mathematics, University of California, Los Angeles, CA 90095, USA}
\email{visan@math.ucla.edu}

\author{Lei Wu}
\address{Department of Mathematics, Lehigh University}
\email{lew218@lehigh.edu}

\begin{abstract}
We show that solutions to the Ablowitz--Ladik system converge to solutions of the cubic nonlinear Schr\"odinger equation for merely $L^2$ initial data.  Furthermore, we consider initial data for this lattice model that excites Fourier modes near both critical points of the discrete dispersion relation and demonstrate convergence to a \emph{decoupled} system of nonlinear Schr\"odinger equations.   
\end{abstract}

\maketitle

\section{Introduction}

The Ablowitz--Ladik system, introduced in \cite{MR0377223}, describes the evolution of a field $\alpha:\Z\to\C$.  It comes in two variants, focusing and defocusing.  Both may be written as
\begin{align}\label{AL}\tag{AL}
\ui\dt\alpha_n &= -\big(\alpha_{n-1}-2\alpha_n+\alpha_{n+1}\big)
+\alpha_n\beta_n\big(\alpha_{n-1}+\alpha_{n+1}\big)
\end{align}
by adopting the expedient that $\beta_n:=\overline\alpha_n$ in the defocusing case and $\beta_n:=-\overline\alpha_n$ in the focusing case.  (This convention will remain in force throughout the paper.)

Ablowitz and Ladik introduced this model as a discrete form of the one-dimensional cubic Schr\"odinger equation,
\begin{align}\label{NLS}\tag{NLS}
\ui\partial_t \psi = - \Delta \psi \pm 2 |\psi|^2\psi,
\end{align}
that preserves its complete integrability.

Given this connection, it is quite natural to imagine that solutions of \eqref{AL} provide an accurate means of simulating solutions of \eqref{NLS}, at least for slowly-varying (high-regularity) initial data.  Indeed, this intuition is backed up by a number of studies that we will discuss below.  The question at the heart of this paper, however, is this: to what extent do solutions to \eqref{AL} and \eqref{NLS} parallel one another for low-regularity initial data?

We will affirm the intuition laid out above still more strongly than previous authors by showing convergence for merely $L^2$ initial data.  This constitutes a significant expansion of the class of initial data relative to previous investigations such as \cite{MR3939333,MR3009717} which require the initial data to lie in $H^1$. We achieve this through the introduction of a new method that synthesizes compactness and Strichartz-based techniques.

As a counter-point to our convergence result, we will also demonstrate a certain naivet\'e in the logic enunciated earlier by studying solutions to \eqref{AL} with initial data
\begin{align}\label{dumb double data}
\alpha_n(0) = h \psi_0 (hn) +  (-1)^n h \phi_0 (hn),
\end{align}
which combines slowly varying and rapidly oscillating initial data.  Here, $h$ is the length scale associated to the continuum approximation; we are studying the $h\to0$ limit.  Note the prefactor $h$ appearing in \eqref{dumb double data}; this ensures the balance between the dispersion and nonlinear effects.  It is solely in this regime that one expects a nonlinear dispersive limit as $h\to 0$. In this regime, the natural time scale is also stretched --- solutions of \eqref{NLS} over the time interval $[-T,T]$ correspond to those of \eqref{AL} over the longer period $[-h^{-2}T,h^{-2}T]$.

We will show that initial data of the form \eqref{dumb double data} leads to solutions to a system of nonlinear Schr\"odinger equations:
\begin{align}\label{NLS system}
    \ui\dt\pe=-\D\pe\pm2\abs{\pe}^2\pe \qtq{and}
    -\ui\dt\phi=-\D\phi\pm2\abs{\phi}^2\phi,
\end{align}
in which --- and this was a surprise to us --- the two frequency components evolve independently!  As we will see, this absence of interactions is only observed asymptotically and will be traced to a certain nonresonance in spacetime.  In our analysis of initial data of the form \eqref{dumb double data}, we will allow data $\phi_0,\psi_0$ that is merely $L^2$, thereby unifying the two main themes of this paper.  

An important phenomenological parallel to our discovery of \eqref{NLS system} in the continuum limit of \eqref{AL} has appeared previously in the link between KdV and the Toda lattice.  In \cite{MR3366652,MR3327553} it was demonstrated that the Toda lattice must be modeled by a \emph{pair} of KdV equations.  We also direct the reader to \cite{MR1870156} which considers more general (non-integrable) lattice models.

Complete integrability of the Toda lattice is at the very heart of the methods of \cite{MR3366652,MR3327553}, which focus on the description of action-angle variables.  This is very different from what we shall be doing; indeed, much of our analysis is based directly on Strichartz estimates.  The complete integrability of \eqref{AL} does play a small role in our arguments, namely, in demonstrating equicontinuity.  Nevertheless, it is reasonable to imagine that our approach could be expanded to cover non-integrable lattice approximations to \eqref{NLS}, obtaining equicontinuity via almost conservation laws and Strichartz estimates.  We do not pursue this here because we are specifically interested in the Ablowitz--Ladik system due its link to spin chain dynamics.

This link was first discovered in \cite{Ishimori} and is a discrete analogue of the famous Hasimoto transform linking the continuum Heisenberg model to \eqref{NLS}. Indeed, our desire to understand the continuum limit of low-regularity solutions to \eqref{AL} is fueled by the long-term goal, elaborated in \cite{MR4049393}, of constructing dynamics for the continuum Heisenberg model in its Gibbs state.

Via the discrete Hasimoto transform, the analysis of the spin model in the Gibbs state is converted to the study of \eqref{AL} with $\alpha_n(t)$ being certain specific (non-Gaussian) i.i.d. random variables.  Such initial data excites Fourier modes throughout the circle to an equal degree and consequently both terms in \eqref{dumb double data} may be considered equally significant. In \cite{MR4049393}, the first and third author together with Angelopoulos constructed global dynamics to \eqref{AL} for this type of data, and proved invariance of this white noise measure under the \eqref{AL} flow.  Moreover, the discrete spin chain model was shown to admit global solutions in the Gibbs state and these were shown to preserve  the Gibbs measure.

In order to rigorously formulate our results, we need to explain how we pass between functions on the line $\R$ and the lattice $\Z$.  Clearly, the sampling formula \eqref{dumb double data} does not make sense in the $L^2$ setting we will be studying; indeed, point evaluation is not continuous on $H^s(\R)$ unless $s>\frac12$.  The remedy is to perform a mild smoothing of the continuum initial data before sampling:
\begin{align}\label{E:initial data}
\alpha_n (0) = h [P_{\leq N} \psi_0] (hn) +  (-1)^n h [P_{\leq N} \phi_0] (hn).
\end{align}
Here $P_{\leq N}$ denotes the sharp projection to frequencies $|\xi|\leq N$.  For concreteness, we will relate $h$ and $N$ by a power law:
\begin{align}\label{N from h}
N = h^{-\gamma} \qtq{with} 0< \gamma \leq \tfrac{13}{18} .
\end{align}
As $h\to 0$, so $N\to\infty$, thus revealing the full irregularity of the initial data; cf.~\eqref{all}.

The choice of sharp cutoffs is made for no other reason than expository simplicity.  Such multipliers are much less local and are unbounded on $\ell^\infty$.  Our use of such cutoffs is indicative of the robustness of our arguments and also saves the reader from considering a multitude of bump functions concomitant with traditional Littlewood--Paley theory.

While a signal-processing philosophy would suggest choosing $N$ as the Nyquist frequency (and no higher), dynamical considerations make $N\approx h^{-1}$ ill-suited to the problem at hand.  The principal obstruction appears at the linear level: The dispersion relation for the discrete Schr\"odinger equation is $\omega=2-2\cos(\theta)$.  This is only approximately quadratic near $\theta=0$ and $\theta=\pi$ (modulo $2\pi$), which are the only regimes where we may expect Schr\"odinger--like behavior.  Near the inflection points $\theta=\pm\pi/2$, the discrete model has very weak dispersion.  This in turn leads to a wholly different mKdV-like dynamics with a different characteristic time scale.  Initial data with significant excitation near the inflection points will be treated in future work.

As the \eqref{AL} dynamics are nonlinear, the Fourier support of a solution is not preserved in time; one fully expects excitations to spread from the initial data to neighborhoods of the inflection points.  Nevertheless, we are able to control the extent of such transfer and so demonstrate that the choice \eqref{N from h} suffices to suppress these unwanted mKdV dynamics; see Proposition~\ref{P:suppression}.

Based on our analysis, which we believe gives an accurate estimate of the transfer of $\ell^2$ norm to neighborhoods of the inflection points, we would argue that should one be employing \eqref{AL} as a numerical scheme for the simulation of NLS (an idea advocated for in \cite{MR1308108}, for example), then one should first band-limit the initial data in the manner indicated by \eqref{N from h} so as to avoid spurious results generated by the deficiencies of the discrete dispersion relation.

For readers with a particular interest in numerical schemes for NLS (rather than purely \eqref{AL}), we note that the inflection-point issue can also be avoided by combining Fourier truncation of the initial data in concert with a different discrete model.   Two such approaches are discussed in \cite{MR2485456,MR2980459}: One may perform Fourier truncations on the nonlinearity (which preserves the Hamiltonian structure) or one may introduce a mild form of viscosity at the inflection points to suppress such unwanted excitations.

To compare the solution of \eqref{AL} with initial data \eqref{E:initial data} to that of \eqref{NLS system}, we must (at each moment of time) generate \emph{two} functions, $\psi(t,x)$ and $\phi(t,x)$, on the real line from the single lattice function $\alpha_n(h^{-2}t)$.  To do this, we simply split the Fourier transform $\widehat\alpha$ into two pieces using a sharp cutoff to either semicircle:
\begin{gather}\label{psi phi def}
\begin{aligned}
\widehat{\psi^h}(t,\xi)&= \widehat{\alpha}(h^{-2}t, h\xi) \chi_{(-\frac{\pi}2,\frac\pi 2)}(h\xi),\\
\widehat{\phi^h}(t,\xi)&= e^{4\ui h^{-2}t} \widehat{\alpha}(h^{-2}t, h\xi+ \pi ) \chi_{(-\frac{\pi}2,\frac\pi 2)}(h\xi).
\end{aligned}
\end{gather}
or equivalently,
\begin{gather}\label{psi phi def'}
\begin{aligned}
\psi^h(t,x) &= h^{-1} \int_{|\theta|<\frac{\pi}2} e^{\ui \frac{x}{h}\theta}\,\widehat{\alpha}(h^{-2}t,\theta) \tfrac{d\theta}{2\pi},\\
\phi^h(t,x) &= h^{-1} e^{4\ui h^{-2}t} \int_{|\theta-\pi|<\frac{\pi}2} e^{\ui \frac{x}{h}(\theta-\pi)}\,\widehat{\alpha}(h^{-2}t,\theta) \tfrac{d\theta}{2\pi}.
\end{aligned}
\end{gather}

One small quibble remains before we may present our main result: Do \eqref{AL} and \eqref{NLS system} actually admit global solutions?  In the case of \eqref{NLS system}, the answer is unequivocally yes: Tsutsumi \cite{MR915266} showed that these equations are globally well-posed on $L^2(\R)$. Local well-posedness is shown by contraction mapping in Strichartz spaces; this is then rendered global using the conservation of the $L^2$ norm.

Regarding \eqref{AL}, we see that local well-posedness in $\ell^2(\Z)$ is trivial: RHS\eqref{AL} is locally Lipschitz! In Section~\ref{S:3} we will show how conservation laws guarantee that such solutions exist globally, at least for
\begin{align}\label{small h}
h\leq h_0 :=\min\Bigl\{1,\tfrac1{100}\bigl[\|\psi_0\|_{L^2}^2 + \|\phi_0\|_{L^2}^2 \bigr]^{-1}\Bigr\}.
\end{align}
As we wish to send $h\to 0$ this hypothesis is of no real consequence; it exists solely to address a singularity in the natural conservation laws (and also the symplectic structure) in the defocusing case.  Indeed, in the defocusing case, it is natural to regard the phase space as comprised only of maps $\alpha:\Z\to\mathbb{D}$.

The main result of this paper is the following, which we present schematically in Figure~\ref{F:1}: 

\begin{theorem}\label{T:main}
Fix $\psi_0,\phi_0\in L^2(\R)$ and let $\psi,\phi\in (C_tL^2_x\cap L_{t,loc}^6L_x^6)(\R\times\R) $ denote the unique global solutions of \eqref{NLS system} with this initial data.

Given $h>0$ satisfying \eqref{small h}, let $\alpha_n(t)$ be the global solution to \eqref{AL} with initial data specified by \eqref{E:initial data} and \eqref{N from h} and let $\psi^h,\phi^h:\R\times\R\to \C$ be the corresponding spacetime representatives of this solution built via \eqref{psi phi def'}. Then 
\begin{align}\label{E:T:main}
    \psi^h(t,x)\rt\psi(t,x) \qtq{and} \phi^h(t,x)\rt\phi(t,x)
\end{align}
in $C_tL^2_x([-T,T]\times\r)$ for any $T>0$.
\end{theorem}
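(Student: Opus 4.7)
My plan is a compactness-plus-Strichartz scheme. First, establish uniform-in-$h$ bounds for $\psi^h,\phi^h$ in $C_tL^2_x\cap L^6_{t,x}([-T,T]\times\r)$. Second, extract a strong subsequential limit $(\psi_\infty,\phi_\infty)$ in $C_tL^2_x$ via tightness and equicontinuity. Third, identify this limit by passing to the limit in a Duhamel formulation of \eqref{AL} expressed through \eqref{psi phi def'}. Finally, invoke the $L^2$-uniqueness of \eqref{NLS system} (Tsutsumi \cite{MR915266}) to promote subsequential convergence to the full statement \eqref{E:T:main}.

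For the uniform bounds, the $C_tL^2_x$ control is immediate from conservation of $\nm{\alpha(t)}_{\ell^2}$ under \eqref{AL} (valid for $h\leq h_0$) combined with Plancherel. The Strichartz bound requires discrete Strichartz estimates for the linear discrete Schr\"odinger flow, uniform in $h$, near each of the parabolic points $\theta=0,\pi$; rescaling to the continuum then yields uniform Strichartz for $\psi^h$ and $\phi^h$, and H\"older in $L^{6/5}_{t,x}$ handles the cubic nonlinearity on short intervals whose length depends only on $\nm{\psi_0}_{L^2}+\nm{\phi_0}_{L^2}$, iterated to cover $[-T,T]$. For equicontinuity I would use the higher conservation laws of \eqref{AL} --- the one substantive use of complete integrability in our arguments --- to bound Fourier tails uniformly, yielding tightness of $\{\psi^h(t),\phi^h(t)\}$ in $L^2(\r)$; time-equicontinuity follows from the Strichartz bounds via Duhamel. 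Proposition~\ref{P:suppression} is crucial at this stage: it guarantees that no $\ell^2$ mass migrates into the inflection-point neighborhoods $|\theta\mp\tfrac{\pi}{2}|\ll 1$, so the splitting \eqref{psi phi def'} loses nothing in the limit. Aubin--Lions then produces the subsequential limit.

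The crux, and the main obstacle, is identifying $(\psi_\infty,\phi_\infty)$ as a solution of \eqref{NLS system}. Substituting the ansatz $\alpha_n(h^{-2}t)\approx h\psi^h(t,hn)+(-1)^n h e^{-4\ui h^{-2}t}\phi^h(t,hn)$ into the trilinear $\alpha_n\beta_n(\alpha_{n-1}+\alpha_{n+1})$ produces eight interaction types. Tracking the $(-1)^n$ factors and the $e^{\mp 4\ui h^{-2}t}$ gauge phases dictated by \eqref{psi phi def'}, these sort into three classes. The diagonal triples $\psi\bar\psi\psi$ and $\phi\bar\phi\phi$ produce the correct NLS nonlinearities --- the symbol $2\cos\theta_3$ from $\alpha_{n-1}+\alpha_{n+1}$ supplies the factor of $2$, with the sign near $\theta=\pi$ reconciled by the gauge-induced flip from $\ui\dt$ to $-\ui\dt$ in the $\phi$-equation. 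The triples $\phi\bar\psi\phi$ and $\psi\bar\phi\psi$ carry unremovable fast time-phases $e^{\pm 8\ui h^{-2}t}$; a nonstationary-phase/integration-by-parts argument carried out directly in the Strichartz spaces from step one sends their Duhamel contribution to zero as $h\to 0$. The most delicate class consists of the ostensibly resonant cross interactions: $v\bar v u$ paired with $u\bar v v$ in the $\psi$-equation, and $u\bar u v$ paired with $v\bar u u$ in the $\phi$-equation. Within each pair, the sign identity $(-1)^n\cdot(-1)^{n\pm 1}=-1$ arising from the single lattice shift produces an exact leading-order cancellation; the antisymmetric first-derivative residue vanishes under the summation over $\pm 1$, and the surviving discrete-second-derivative residue carries a prefactor $h^2$ in the rescaled variables and is negligible in the Strichartz norms of step one.

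Combining this with convergence of the linear part --- $-\DD$ reduces to $-h^2\Delta$ near $\theta=0$ and to $4+h^2\Delta$ near $\theta=\pi$, the additive $4h^{-2}$ in rescaled time being precisely the phase removed by the gauge in \eqref{psi phi def'} --- one verifies that $(\psi_\infty,\phi_\infty)$ satisfies \eqref{NLS system} with initial data $(\psi_0,\phi_0)$. Tsutsumi's uniqueness then forces $(\psi_\infty,\phi_\infty)=(\psi,\phi)$, and a standard subsequence-of-subsequences argument upgrades this to convergence of the full family as $h\to 0$, establishing \eqref{E:T:main}.
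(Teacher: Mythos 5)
Your proposal follows essentially the same route as the paper: uniform $C_tL^2_x\cap L^6_{t,x}$ bounds via the conservation laws, Proposition~\ref{P:suppression}, and discrete Strichartz estimates; compactness from boundedness, equicontinuity, and tightness; identification of subsequential limits through the Duhamel formula, with the decoupling arising from exactly the sign-flip cancellation of the $|\phi|^2\psi$ and $|\psi|^2\phi$ interactions and the temporal nonresonance of the $e^{\pm 8\ui h^{-2}t}$ terms; and Tsutsumi's uniqueness in $C_tL^2_x\cap L^6_{t,x}$ to upgrade subsequential to full convergence. The one caveat is that the replacement of $\alpha_{n\pm1}$ by $\alpha_n$ in the nonlinearity must be justified by the spatial equicontinuity you established, not by a Taylor/second-derivative expansion (unavailable for merely $L^2$ data), but that tool is already in hand from your compactness step.
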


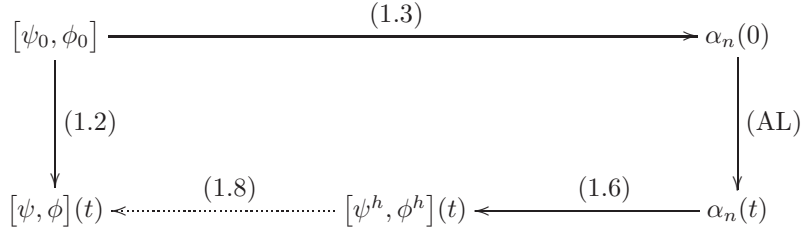
\begin{figure}
\begin{center}\mbox{
\xymatrix@C=7pc@R=4pc{
\big[\psi_0,\phi_0\big] \ar[rr]^{\txt{\eqref{E:initial data}}} \ar[d]^{\txt{\eqref{NLS system}}}
	& & \alpha_{n}(0) \ar[d]^{\txt{\eqref{AL}}} \\
\big[\psi,\phi\big](t) & 
\big[\psi^h,\phi^h\big](t) \ar@{.>}[l]_(.5){\txt{\eqref{E:T:main}}}
	&  \alpha_n(t) \ar[l]_(0.4){\txt{\eqref{psi phi def'}}}
}}\end{center}
\caption{A schematic representation of Theorem~\ref{T:main}.}\label{F:1}
\end{figure}

\subsection{Outline of the proof} Broadly speaking, our argument is one of compactness/uniqueness: we show that for every sequence of parameters $h\to0$ there is a convergent subsequence of discrete solutions (transferred to the line via \eqref{psi phi def'}).  We then show that all such subsequential limits are identical because they obey the same integral equations and those integral equations have unique solutions.

The integral equations we refer to here are nothing more than the Duhamel formulas for solutions of \eqref{NLS system}; see \eqref{duhamel 1} and \eqref{duhamel 2}.  Naturally, these equations contain the cubic nonlinearity.  This is a problem for initial data that is merely in $L^2$ --- cubing an $L^2$ function need not yield a distribution.   If we cannot even determine whether or not a $C_t L^2_x$ function is a solution of \eqref{NLS}, uniqueness becomes hopeless!

The remedy is provided by Strichartz estimates.  As shown in \cite{MR915266}, solutions with $L^2(\R)$ initial data can be constructed via contraction mapping in $L^6_{t,x}([-T,T]\times\R)$; these solutions are unique in this class and the cubic nonlinearity makes sense as a spacetime distribution. To exploit this form of uniqueness, we must prove $L^6_{t,x}([-T,T]\times\R)$ bounds for the functions $\psi^h,\phi^h$.

As mentioned earlier and thoroughly discussed in \cite{MR3939333}, the presence of inflection points in the discrete dispersion relation presents a major obstacle to controlling the discrete solutions in Strichartz spaces.  Our new remedy to this problem is to begin our analysis by controlling the dynamical redistribution of $\ell^2$ norm among frequencies; see Proposition~\ref{P:suppression}.  In Proposition~\ref{Prop:strichartz-bounds}, we show that the frequency control we achieve is strong enough to recover from the inevitable losses in the discrete Strichartz estimates originating from the inflection points.

Let us turn now to the question of compactness, which is addressed in Section~\ref{sec:precompactness}; see Theorem~\ref{T:precompactness}.  As we review at the beginning of that section, precompactness rests on three pillars: boundedness, equicontinuity, and tightness.  Boundedness is deduced easily from the conservation laws of \eqref{AL}, specifically, from the analogue of $L^2$ conservation.

There are actually two equicontinuity requirements because we must allow translations both in space and in time.  Equicontinuity in space can also be formulated as tightness of the Fourier transform; see \eqref{equi 2}.  In this guise, it is easier to see that it is amenable to attack via conservation laws and indeed, this is how we prove it.

Both equicontinuity in time and tightness rely on the high-frequency control provided by equicontinuity in space.  Even under the linear flow, high-frequency wave packets travel very fast, which is antithetical to both tightness and equicontinuity in time.   Of the two, tightness is the more delicate to prove because the natural microscopic conservation law does not interact well with frequency cutoffs.  This is a peculiarity of \eqref{AL} not present in \eqref{NLS} or other discrete analogues.

Section~\ref{Sec:Convergence} is primarily devoted to showing that any subsequential limits of discrete solutions actually solve the Duhamel integral equation.  This relies on all of the preceding.  Strichartz control is essential to overcome the cubic power in the nonlinearity, for example, while equicontinuity in space is needed to replace the nonlinearity in \eqref{AL}, which involves values at adjacent lattice points, with a purely on-site nonlinearity.  Perhaps the most surprising element of this analysis is the fact that the two frequency components $\psi^h$ and $\phi^h$ of the solution do not interact.  The explanation for this has both algebraic and analytic components.  The fact that the \eqref{AL} nonlinearity involves adjacent sites produces a key sign change in \eqref{sign flip} which cancels the naive interactions $|\psi|^2\phi$ and $|\phi|^2\psi$.  However, two prima face significant interactions remain. We prove that these drop out of the $h\to0$ limit due to a nonresonance phenomenon --- they oscillate in time at a frequency far removed from that of the linear solutions; see Lemma~\ref{L:3}.

At the end of Section~\ref{Sec:Convergence}, we close the paper with a quick review of how these results complete the proof of Theorem~\ref{T:main}.

\subsection*{Acknowledgements}

Rowan Killip is supported by NSF grants DMS-1856755 and DMS-2154022. 
Monica Visan is supported by NSF grants DMS-1763074 and DMS-2054194. 
Lei Wu is supported by NSF grant DMS-2104775.

\section{Preliminaries}\label{S:2}

Throughout this paper, $C$ will denote a constant that does not depend on the initial data or on $h$, and which may vary from one line to another.
We write $A \lesssim B$ or $B\gtrsim A$ whenever $A\leq CB$ for some constant $C>0$.  We write $A\simeq B$ whenever $A\lesssim B$ and $B\lesssim A$.  

We use $L_t^q L_x^r$ to denote the spacetime norm
$$
\|F\|_{L_t^q L_x^r(\R\times\R)} :=\Bigl(\int_{\R}\Bigl(\int_{\R} |F(t,x)|^r \,dx \Bigr)^{q/r} \,dt\Bigr)^{1/q},
$$
with the usual modifications when $q$ or $r$ is infinity, or when the domain $\R \times \R$ is replaced by some smaller spacetime region. When $q=r$ we abbreviate $L_t^q L_x^r$ by $L^q_{t,x}$.  The discrete version of the spacetime norm $L_t^q L_x^r(\R\times\R)$ is $L_t^q \ell_n^r(\R\times\Z)$.

The Hilbert--Schmidt norm of a bounded linear operator $\Omega: \l_n^2(\z)\rt \l_n^2(\z)$ is the $\ell^2$ norm of its matrix entries:
\begin{align*}
    \nm{\Omega}_{\j_2}^2:=\sum_{n,m\in\z}\bigl|\Omega_{nm}\bigr|^2.
\end{align*}
By the H\"older inequality, if $\Omega_1,\Omega_2,\cdots,\Omega_m$ with $m\geq 2$ are Hilbert--Schmidt operators, then their product is trace class and 
\begin{align*}
\babs{\tr\big\{\Omega_1\Omega_2\cdots\Omega_m\big\}}\leq \prod_{i=1}^m\nm{\Omega_i}_{\j_2}.
\end{align*}

Our convention for the Fourier transform on the line will be
\begin{align*}
    \widehat{f}(\xi) =\int_{\r}f(x) e^{-\ui x\xi}\,dx \quad\text{so that}\quad f(x)=\int_{\r}\widehat{f}(\xi) e^{\ui x\xi}\,\tfrac{d\xi}{2\pi},
\end{align*}
while in the discrete case we will use
$$
\widehat f(\theta) = \sum_{n\in \Z} f(n) e^{-\ui n\theta} \quad\text{so that}\quad f(n) = \int_{-\pi}^{\pi} \widehat f(\theta) e^{\ui n\theta}\, \tfrac{d\theta}{2\pi}.
$$
These definitions of the Fourier transform yield the Plancherel identities
\begin{align*}
    \int_{\r}\babs{f(x)}^2\, dx = \int_{\r}\babs{\widehat{f}(\xi)}^2\,\tfrac{d\xi}{2\pi} \quad\text{and}\quad \sum_{n\in \Z} |f(n)|^2= \int_{-\pi}^{\pi} |\widehat f(\theta)|^2\,\tfrac{d\theta}{2\pi}.
\end{align*}

With these conventions, the Poisson summation formula takes the form 
\begin{align*}
    \sum_{n\in\z}hf(nh)e^{-\ui n\theta}=\sum_{m\in\z}\widehat{f}\left(\tfrac{\theta+2\pi m}{h}\right) \qtq{for any} f\in\s(\R).
\end{align*}
Therefore, if $\supp(\widehat{f}\,)\subset\bigl[-\frac{\pi}{h},\frac{\pi}{h}\bigr]$ and $|\theta|\leq\pi$, then 
\begin{align*}
    \sum_{n\in\z}hf(nh)e^{-\ui n\theta}=\widehat{f}\left(\tfrac{\theta}{h}\right).
\end{align*}
Consequently, for the initial data \eqref{E:initial data}, we have
\begin{align}\label{data supp}
    \widehat{\ah}(0,\theta)=\sum_{n\in\z}\ah(0,n)e^{-\ui n\theta}=\widehat{P_{\leq N}\psi_0}\left(\tfrac{\theta}{h}\right)+\widehat{P_{\leq N}\phi_0}\left(\tfrac{\theta-\pi}{h}\right)
\end{align}
for $\theta\in[-\tfrac\pi2,\tfrac{3\pi}2]$ and $2Nh<\pi$.  In particular, recalling that $N=h^{-\gamma}$, we have 
\begin{align}\label{all}
\lim_{h\to 0} h^{-1}\|\alpha(0)\|_{\ell^2}^2 = \lim_{h\to 0}\|P_{\leq N} \psi_0\|_{L^2}^2 + \|P_{\leq N}\phi_0\|_{L^2}^2 =  \|\psi_0\|_{L^2}^2 + \|\phi_0\|_{L^2}^2.
\end{align}

\begin{lemma}\label{L:sums to int}
If $f,g \in L^2(\R)$ satisfy $\supp(\widehat f\,)\subseteq\bigl[-\frac\pi h,\frac \pi h\bigr]$ and $\supp(\widehat g)\subseteq\bigl[-\frac\pi h,\frac \pi h\bigr]$, then
\begin{align*}
\int f(x) \,\overline{g(x)} \, dx= h\sum_n f(nh) \,\overline{g(nh)}.
\end{align*}
\end{lemma}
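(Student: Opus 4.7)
The plan is to reduce both sides of the identity to the same integral of $\widehat f\,\overline{\widehat g\,}$, invoking continuous Plancherel on the left, discrete Plancherel on the right, and bridging the two via the Poisson summation formula already recorded in the excerpt.

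First, continuous Plancherel gives
\begin{align*}
\int_\R f(x)\overline{g(x)}\,dx \;=\; \int_\R \widehat f(\xi)\,\overline{\widehat g(\xi)}\,\tfrac{d\xi}{2\pi} \;=\; \int_{-\pi/h}^{\pi/h} \widehat f(\xi)\,\overline{\widehat g(\xi)}\,\tfrac{d\xi}{2\pi},
\end{align*}
where the second equality uses the Fourier support hypothesis on $f$ and $g$.

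For the right-hand side, I would first note that any $L^2$ function with Fourier transform supported in $[-\pi/h,\pi/h]$ is (a.e.\ equal to) a bounded continuous function, so the samples $\{f(nh)\}$ and $\{g(nh)\}$ are well-defined. By the Poisson summation formula displayed just above the lemma statement --- which, although stated for Schwartz functions, extends to our band-limited $L^2$ setting via a one-line density argument (approximating $\widehat f$ and $\widehat g$ in $L^2$ by Schwartz functions supported in the same interval) --- we have
\begin{align*}
\sum_{n\in\Z} h f(nh)\, e^{-\ui n\theta} = \widehat f\bigl(\tfrac{\theta}{h}\bigr)
\qtq{and}
\sum_{n\in\Z} h g(nh)\, e^{-\ui n\theta} = \widehat g\bigl(\tfrac{\theta}{h}\bigr)
\end{align*}
for $\theta\in[-\pi,\pi]$. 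In particular, both sampled sequences lie in $\ell^2(\Z)$ (with $\ell^2$ norm equal to $h^{-1/2}$ times the corresponding $L^2$ norm, by the discrete Plancherel identity applied to these formulas).

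Applying discrete Plancherel to the sequences $\{f(nh)\}$ and $\{g(nh)\}$ and using the two identities above,
\begin{align*}
\sum_{n\in\Z} f(nh)\,\overline{g(nh)} \;=\; \int_{-\pi}^\pi \Bigl(\sum_n f(nh)e^{-\ui n\theta}\Bigr)\overline{\Bigl(\sum_m g(mh)e^{-\ui m\theta}\Bigr)}\,\tfrac{d\theta}{2\pi}
\;=\; \tfrac{1}{h^2}\int_{-\pi}^\pi \widehat f\bigl(\tfrac{\theta}{h}\bigr)\,\overline{\widehat g\bigl(\tfrac{\theta}{h}\bigr)}\,\tfrac{d\theta}{2\pi}.
\end{align*}
The change of variables $\xi=\theta/h$ converts this into $h^{-1}\int_{-\pi/h}^{\pi/h}\widehat f(\xi)\overline{\widehat g(\xi)}\,\tfrac{d\xi}{2\pi}$; multiplying by $h$ and comparing with the computation of the left-hand side yields the claim.

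The argument is essentially routine band-limited sampling; the only subtlety is justifying the Poisson summation identity for $L^2$ functions, which I would dispatch by Schwartz approximation in the frequency variable, noting that both sides of the asserted identity are continuous in the $L^2$ norms of $\widehat f$ and $\widehat g$ on $[-\pi/h,\pi/h]$ (the left by Cauchy--Schwarz; the right by the same computation of $\ell^2$ norms of samples via Plancherel).
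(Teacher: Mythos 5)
Your argument is correct and, at bottom, the same as the paper's: both reduce to the observation that $h^{1/2}f(nh)$ are exactly the Fourier coefficients of $\widehat f$ with respect to the orthonormal basis $\{h^{1/2}e^{-\ui nh\xi}\}$ of $L^2([-\tfrac\pi h,\tfrac\pi h];\tfrac{d\xi}{2\pi})$, after which Parseval (equivalently, your combination of Poisson summation with discrete Plancherel) finishes the proof. The only stylistic difference is that the paper performs this basis expansion directly in two lines, whereas you route through Poisson summation plus a density argument; your closing remark correctly identifies that the continuity needed for that density step is itself just the Fourier-coefficient computation, so there is no gap.
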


\begin{proof}
Expanding $\widehat f$ and $\widehat g$ with respect to the orthonormal basis $\{h^{\frac12} e^{-\ui nh\xi}:\, n\in \Z\}$ of $L^2([-\frac\pi h,\frac \pi h]; \frac{d\xi}{2\pi})$, we obtain
\begin{align*}
\int f(x) \,\overline{g(x)} \, dx= \int_{-\frac\pi h}^{\frac\pi h} \widehat f(\xi) \,\overline{\widehat g(\xi)} \,\tfrac{d\xi}{2\pi}
&=\sum_n \int h^{\frac12} e^{\ui nh\xi}\widehat f(\xi)\, d\xi \, \overline{\int h^{\frac12} e^{\ui nh\eta}\widehat g(\eta)\, d\eta}\\
&= h\sum_n f(nh)\, \overline{g(nh)}. \qedhere
\end{align*}
\end{proof}

It is not difficult to upgrade the previous argument to show that $f\mapsto \sqrt{h} f(nh)$ is actually a unitary map --- a result known as Shannon's sampling theorem.  For what follows, we will also need to understand the $L_x^p \to \ell_n^p$ properties of this mapping (on band limited functions).  This was addressed already by Plancherel and Polya in \cite{PP}:

\begin{lemma} \label{Lem:norm-relation}
Fix $1<p<\infty$ and let $f:\R\to \C$ be a Schwartz function with $\supp(\widehat{f}\,)\subseteq\bigl[-\frac\pi h,\frac \pi h\bigr]$.  Then
\begin{align} \label{lp-Lp}
    h\sum_{n\in\z}|f(nh)|^p\simeq \int_{\r}|f(x)|^p\, dx.
\end{align}
\end{lemma}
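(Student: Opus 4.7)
By rescaling $f \mapsto f(h\cdot)$, which maps the Fourier support $[-\pi/h,\pi/h]$ to $[-\pi,\pi]$ and the sampling lattice $h\Z$ to $\Z$, the problem reduces to proving
\begin{align*}
    \sum_n |f(n)|^p \simeq \int_{\R} |f(x)|^p \, dx \qquad \text{for all Schwartz } f \text{ with } \supp \widehat f \subseteq [-\pi,\pi].
\end{align*}
I would handle the two inequalities separately.

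For the upper bound, I would fix a Schwartz function $\eta$ with $\widehat\eta$ smooth, compactly supported, and identically $1$ on $[-\pi,\pi]$, so that the reproducing identity $f = \eta \ast f$ holds. The pointwise estimate $|f(n)| \leq \int |\eta(n-y)||f(y)|\,dy$ combined with Jensen's inequality (applied with the probability density $|\eta(n-\cdot)|/\|\eta\|_{L^1}$) gives
\begin{align*}
    |f(n)|^p \leq \|\eta\|_{L^1}^{p-1}\int |\eta(n-y)|\, |f(y)|^p\, dy.
\end{align*}
Summing in $n$, exchanging sum and integral via Fubini, and using the uniform Schwartz-decay bound $\sum_n |\eta(n-y)| \lesssim 1$ completes this half.

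For the lower bound, I would invoke the Whittaker--Shannon sampling formula $f(y) = \sum_m f(m)\,\mathrm{sinc}(\pi(y-m))$, where $\mathrm{sinc}(z) := \sin z/z$. Writing $y = n + t$ with $t \in [-\tfrac12,\tfrac12]$ and using the identity $\sin(\pi(n-m+t)) = (-1)^{n-m}\sin(\pi t)$, the sampling formula rearranges to
\begin{align*}
    f(n+t) = f(n)\,\mathrm{sinc}(\pi t) + (-1)^n\,\frac{\sin(\pi t)}{\pi}\sum_{m\neq n}\frac{(-1)^m f(m)}{n-m+t}.
\end{align*}
The second summand is, up to a sign, a shifted discrete Hilbert transform applied to the alternating-sign sequence $g(m):=(-1)^m f(m)$; such shifted Hilbert transforms are bounded on $\ell^p$ for $1<p<\infty$, with norms uniform in $t \in [-\tfrac12,\tfrac12]$, by perturbing off of the unshifted discrete Hilbert transform and noting that $\tfrac{1}{n-m+t}-\tfrac{1}{n-m}$ is $O(|t|(n-m)^{-2})$ for $m\neq n$ and $|t|\leq\tfrac12$, whose kernel is summable. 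Raising the identity to the $p$-th power, integrating over $t \in [-\tfrac12,\tfrac12]$, summing in $n$, and invoking this uniform $\ell^p$-boundedness yields $\|f\|_{L^p}^p \lesssim \sum_n |f(n)|^p$.

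The main obstacle is the exact-Nyquist nature of the hypothesis: the Shannon reconstruction kernel $\mathrm{sinc}$ decays only as $1/|x|$ and is not in $L^1$, which rules out any direct Young-convolution estimate for the lower bound. The resolution above turns this slow decay into an asset by exploiting the oscillation of $\sin$: the resulting operator is a shifted discrete Hilbert transform, whose $\ell^p$-boundedness requires precisely the hypothesis $1<p<\infty$ stipulated in the lemma.
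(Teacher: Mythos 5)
Your proposal is correct. The paper itself offers no proof of this lemma --- it is quoted as a classical result of Plancherel and P\'olya, with a citation in place of an argument --- so there is nothing to compare line by line; what you have written is essentially the classical Plancherel--P\'olya argument, reconstructed correctly. The rescaling to $h=1$ is legitimate (both sides of \eqref{lp-Lp} scale the same way), the Jensen/Fubini argument for the upper bound is sound (and, as it happens, works even at $p=1$), and the lower bound correctly identifies the real difficulty: at the exact Nyquist rate the reconstruction kernel $\mathrm{sinc}$ is not integrable, so one must exploit the oscillation. Your reduction of the off-diagonal sum to a $t$-shifted discrete Hilbert transform acting on $(-1)^m f(m)$, with uniform $\ell^p$ bounds obtained by perturbing off the unshifted transform via the summable difference kernel $O(|t|(n-m)^{-2})$, is exactly where the hypothesis $1<p<\infty$ enters, and dovetails with the paper's own recollection of M.~Riesz's $\ell^p$-boundedness of the discrete Hilbert transform in the paragraph preceding the lemma. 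The final step --- integrating $|f(n+t)|^p$ over $t\in[-\frac12,\frac12]$ and summing in $n$ --- correctly recovers $\|f\|_{L^p}^p$. This is a complete and self-contained proof of a statement the paper merely cites.
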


It is well-known that operators defined by sharp (as opposed to smooth) Fourier cutoffs are $L^p$-bounded for $1<p<\infty$; indeed, this follows immediately from the well-known $L^p$-boundedness of the Hilbert transform,
$$
\widehat{Hf} (\xi) = -\ui\sign(\xi) \widehat{f}(\xi) \qtq{or equivalently,} [Hf](x) = \text{P.V.}\int \frac{f(y)}{x-y}\,\frac{dy}{\pi} ,
$$
which was proved by M.~Riesz \cite{Riesz1928}.  In Section~23 of this very same paper, Riesz considered the discrete Hilbert transform and showed that it is $\ell^p$-bounded for all $1<p<\infty$.  It follows that for any arc $[a,b]$ on the circle, the mapping
$\widehat\alpha(\theta) \mapsto \chi_{[a,b]}(\theta) \widehat\alpha(\theta)$
is bounded on $\ell^p(\Z)$.

Our next two results concern the linear operator arising in \eqref{psi phi def'}, which transfers solutions of \eqref{AL} to functions on $\R$.  Specifically, we mean the mapping
\begin{align}\label{R def}
[\mathcal R c] (x) := h^{-1} \int_{-\frac\pi2}^{\frac\pi2} e^{\ui\frac{x}{h}\theta} \widehat c(\theta)\tfrac{d\theta}{2\pi} = \text{P.V.}\sum_n \frac{\sin(\tfrac{\pi}{2h}[x-nh])}{\pi(x-nh)} c_n
\end{align}
which we will show maps $\ell^2_n$ to $L^2_x$.  Note that \eqref{psi phi def'} can be rewritten as
$$\psi^h(t) = \mathcal R [\alpha_n(h^{-2}t)] \qtq{and}\phi^h(t) = e^{4\ui h^{-2}t} \mathcal R [(-1)^n\alpha_n(h^{-2}t)].$$

\begin{lemma}\label{L:C}
The operator $\mathcal R$ is a bounded operator from $\ell_n^2(\Z)$ to $L^2_x(\R)$ with norm
$$
\|\mathcal R\|_{\ell_n^2(\Z)\to L^2_x(\R)}\lesssim h^{-\frac12}.
$$
\end{lemma}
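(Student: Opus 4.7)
The plan is to reduce the estimate to Plancherel's theorem on both $\R$ and the circle by reading off the continuum Fourier transform of $\mathcal R c$ directly from the defining formula \eqref{R def}.

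First I would make the change of variables $\xi = \theta/h$ inside the integral defining $[\mathcal R c](x)$, which turns it into
\begin{align*}
[\mathcal R c](x) = \int_{-\frac{\pi}{2h}}^{\frac{\pi}{2h}} e^{\ui x \xi}\, \widehat c(h\xi)\, \tfrac{d\xi}{2\pi}.
\end{align*}
This identifies $\mathcal R c$ as the inverse continuum Fourier transform of the $L^2(\R)$ function $\xi \mapsto \widehat c(h\xi)\,\chi_{(-\frac{\pi}{2h},\frac{\pi}{2h})}(\xi)$; in particular $\mathcal R c \in L^2_x(\R)$ whenever $c \in \ell^2_n(\Z)$, since $\widehat c \in L^2([-\pi,\pi];\frac{d\theta}{2\pi})$.

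Next I would apply Plancherel's identity on $\R$ to compute
\begin{align*}
\|\mathcal R c\|_{L^2_x(\R)}^2 = \int_{-\frac{\pi}{2h}}^{\frac{\pi}{2h}} |\widehat c(h\xi)|^2 \,\tfrac{d\xi}{2\pi},
\end{align*}
and then undo the scaling via $\theta = h\xi$ to obtain
\begin{align*}
\|\mathcal R c\|_{L^2_x(\R)}^2 = \tfrac{1}{h}\int_{-\frac{\pi}{2}}^{\frac{\pi}{2}} |\widehat c(\theta)|^2\, \tfrac{d\theta}{2\pi} \le \tfrac{1}{h}\int_{-\pi}^{\pi} |\widehat c(\theta)|^2\, \tfrac{d\theta}{2\pi} = \tfrac{1}{h}\,\|c\|_{\ell^2_n(\Z)}^2,
\end{align*}
where the last equality is Plancherel on the discrete side. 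Taking square roots yields the claimed bound $\|\mathcal R\|_{\ell^2_n \to L^2_x} \lesssim h^{-1/2}$. There is no real obstacle here; the only point worth watching is matching the two different $2\pi$-normalizations of the Fourier transform on $\R$ and on $\Z$, which is precisely what the factor $h^{-1}$ in the definition of $\mathcal R$ is designed to absorb.
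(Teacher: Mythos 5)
Your proof is correct, and it is essentially the paper's argument: both are one-line Plancherel computations exploiting that $\widehat{\mathcal R c}(\xi)=\widehat c(h\xi)\chi_{(-\frac{\pi}{2h},\frac{\pi}{2h})}(\xi)$. The only cosmetic difference is that the paper routes the computation through the band-limited sampling identity of Lemma~\ref{L:sums to int} (writing $\|\mathcal R c\|_{L^2_x}=h^{1/2}\|[\mathcal R c](nh)\|_{\ell^2_n}$ and then applying discrete Plancherel), whereas you apply Plancherel on the line directly after the change of variables $\xi=\theta/h$; the two are interchangeable and equally short.
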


\begin{proof}
The result is an immediate consequence of Lemma~\ref{L:sums to int}.  Indeed,
\begin{equation*}
\|\mathcal R c\|_{L_x^2}= h^{\frac12} \|[\mathcal R c](nh)\|_{\ell^2_n} = h^{\frac12} \|\widehat c\|_{L^2([-\frac\pi 2, \frac\pi 2])}\leq h^{\frac12} \|c\|_{\ell^2_n}.\qedhere
\end{equation*}
\end{proof}

\begin{lemma}\label{L:tighty} Let $c \in \ell^2_n$ be a sequence supported in the interval $[-h^{-1}L,h^{-1}L]$. Then for any $L'>0$ we have
\begin{align}\label{E:tighty}
\int_{|x|>L+L'} \bigl|[\mathcal R c](x)\bigr|^2\,dx \lesssim \frac{L}{h L'} \| c_n \|_{\ell^2_n}^2 .
\end{align}
\end{lemma}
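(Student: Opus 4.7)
My plan would be to establish this by direct manipulation of the kernel representation already written out in \eqref{R def}. Setting $K(y) := \frac{\sin(\pi y/(2h))}{\pi y}$, we have $[\mathcal R c](x) = \sum_n K(x-nh)\, c_n$, and the only input needed is the trivial pointwise bound $|K(y)| \leq \frac{1}{\pi|y|}$.

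The support hypothesis $|n| \leq h^{-1}L$ forces $|nh| \leq L$, so for every $|x| > L + L'$ each index appearing in the sum satisfies $|x-nh| \geq |x| - L > L'$. Applying Cauchy--Schwarz in $n$ would then give the pointwise estimate
\begin{align*}
\babs{[\mathcal R c](x)}^2 \leq \nm{c_n}_{\l^2_n}^2 \sum_{|n| \leq h^{-1}L} \babs{K(x-nh)}^2 \ls \nm{c_n}_{\l^2_n}^2 \cdot \frac{1 + h^{-1}L}{(|x|-L)^2},
\end{align*}
since the count $2\lfloor h^{-1}L\rfloor + 1 \ls 1 + h^{-1}L$ bounds the number of terms and every denominator $|x - nh|$ is replaced by the common lower bound $|x| - L$.

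It would then remain to integrate this pointwise bound over $\{|x|>L+L'\}$, which reduces to the elementary one-dimensional computation $\int_{L+L'}^{\infty} (x-L)^{-2}\,dx = 1/L'$ together with its mirror on $x < -(L+L')$. This produces precisely the claimed bound $\nm{c_n}_{\l^2_n}^2 \cdot L/(hL')$ in the nontrivial regime $L \gtrsim h$.

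I do not anticipate any real obstacle here: the whole argument is a routine one-dimensional kernel estimate, and no Fourier-side or sampling machinery (such as Lemma~\ref{L:sums to int}) is needed. The only mild subtlety is the degenerate case $L < h$, in which the support hypothesis forces $c$ to live on $O(1)$ lattice sites and the ``$1$'' in the factor $1 + h^{-1}L$ is absorbed into the $\ls$; the conclusion then follows by direct inspection.
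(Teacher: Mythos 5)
Your proof is correct and is essentially the same as the paper's: the one-line proof in the paper rests on exactly your two ingredients --- the kernel tail bound $\int_{|y|>L'}|K(y)|^2\,dy\lesssim 1/L'$ and the fact that the support contains $\lesssim h^{-1}L$ lattice points --- the only cosmetic difference being that the paper applies Minkowski's inequality in $L^2_x$ (giving $\|\mathcal R c\|_{L^2(|x|>L+L')}\le\|c\|_{\ell^1_n}\sup_n\|K(\cdot-nh)\|_{L^2(|x-nh|>L')}$ together with $\|c\|_{\ell^1_n}\lesssim (h^{-1}L)^{1/2}\|c\|_{\ell^2_n}$) where you apply Cauchy--Schwarz pointwise in $x$ before integrating. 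One small correction: in the degenerate regime $L\ll h$ the stated inequality genuinely fails (take $c=\delta_0$ and let $L\to0$ with $h,L'$ fixed: the left-hand side stays bounded below while $L/(hL')\to0$), so it does not ``follow by direct inspection''; both your argument and the paper's implicitly assume $L\gtrsim h$, which is all that is ever used in the application.
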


\begin{proof}
The result follows immediately from two simple observations:
\begin{equation*}
\int_{|x-nh|>L'} \Bigl| \frac{2\sin(\tfrac{\pi}{2h}[x-nh])}{x-nh}\Bigr|^2\,dx \lesssim  \frac{1}{L'}
\qtq{and} \| c \|_{\ell^1_n} \lesssim  h^{-\frac12} L^\frac12 \| c \|_{\ell^2_n}. \qedhere
\end{equation*}
\end{proof}

\section{Conservation Laws}\label{S:3}

As a completely integrable system, \eqref{AL} enjoys infinitely many conservation laws.   Of these, we mention just three:
\begin{gather*}
M(\alpha)=-\sum_{n\in\z}\ln\big(1-\ah_n\bh_n\big), \\
H(\alpha)= - \sum_{n\in\z} \ah_n\bh_{n+1}+\ah_{n+1}\bh_n+2\ln\big(1-\ah_n\bh_n\big) ,\\
H_2(\alpha) =  - \sum_n  2\ln\big(1-\ah_n\bh_n\big) + 2\Re \Big[ \alpha_{n+2}\beta_{n} - \tfrac12 \alpha_n^2\beta_{n-1}^2 - \alpha_{n+1}\alpha_n\beta_n\beta_{n-1} \Bigr] .
\end{gather*}
Note that $H$ also serves as the Hamiltonian for \eqref{AL} with respect to the Poisson structure
\begin{align*}
    \big\{F,G\big\}:=\tfrac{1}{\ui}\sum_{n\in\z}\big(1-\ah_n\bh_n\big)\left(\tfrac{\p F}{\p\ah_n}\tfrac{\p G}{\p\bh_n}-\tfrac{\p F}{\p\bh_n}\tfrac{\p G}{\p\ah_n}\right).
\end{align*}

\begin{proposition}\label{P:mass}
Under the assumption \eqref{small h}, the evolution \eqref{AL} with initial data \eqref{E:initial data} has a global solution in $C_t\ell_n^2$. Moreover,
\begin{align}\label{E:mass bound}
  \bigl| M(\alpha(t))\bigr|  \simeq \nm{\ah(t)}_{\l_n^2}^2\lesssim h\bigl[\|\psi_0\|_{L^2}^2 + \|\phi_0\|_{L^2}^2 \bigr]  \quad\text{uniformly for $t\in \R$}.
\end{align}
\end{proposition}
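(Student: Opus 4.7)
The plan is a standard bootstrap argument built on conservation of $M$, together with the smallness of the initial data guaranteed by \eqref{small h}.

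\textbf{Step 1: Size of the initial data.} First I would compute $\|\alpha(0)\|_{\ell^2}^2$ via Plancherel. By \eqref{data supp}, when $2Nh<\pi$ the two summands of $\widehat{\alpha}(0,\theta)$ are supported in disjoint half-circles centered at $0$ and at $\pi$, so Plancherel on the torus gives
\begin{align*}
\|\alpha(0)\|_{\ell^2}^2 = h\bigl[\|P_{\leq N}\psi_0\|_{L^2}^2 + \|P_{\leq N}\phi_0\|_{L^2}^2\bigr] \leq h\bigl[\|\psi_0\|_{L^2}^2 + \|\phi_0\|_{L^2}^2\bigr] \leq \tfrac{1}{100},
\end{align*}
by \eqref{small h}. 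In particular $\|\alpha(0)\|_{\ell^\infty}^2 \leq \|\alpha(0)\|_{\ell^2}^2 \leq \tfrac{1}{100}$, so $1-\alpha_n(0)\beta_n(0)$ stays comfortably away from $0$ (and from $\infty$) in both the focusing and defocusing cases.

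\textbf{Step 2: Two-sided comparison $|M|\simeq\|\alpha\|_{\ell^2}^2$.} Recall $\alpha_n\beta_n = \pm|\alpha_n|^2$ with the $+$ sign in the defocusing and $-$ in the focusing case. Using the elementary inequalities
\begin{align*}
 \tfrac{x}{2}\leq -\ln(1-x)\leq 2x\quad\text{for }x\in[0,\tfrac12], \qquad \tfrac{x}{2}\leq \ln(1+x)\leq x\quad\text{for }x\in[0,1],
\end{align*}
I would show that whenever $\|\alpha(t)\|_{\ell^\infty}^2 \leq \tfrac12$, one has
\begin{align*}
 \tfrac12\|\alpha(t)\|_{\ell^2}^2 \leq |M(\alpha(t))| \leq 2\|\alpha(t)\|_{\ell^2}^2.
\end{align*}

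\textbf{Step 3: Bootstrap and global existence.} Since RHS\eqref{AL} is locally Lipschitz on $\ell^2$, standard ODE theory on the Banach space $\ell^2$ produces a unique maximal solution on some interval $[0,T_*)$; moreover, if $\|\alpha(t)\|_{\ell^2}$ stays bounded, then $T_*=\infty$. Define
\begin{align*}
T^\sharp := \sup\{T\in[0,T_*)\,:\, \|\alpha(t)\|_{\ell^\infty}^2 \leq \tfrac12 \text{ for all }t\in[0,T]\}.
\end{align*}
By continuity of $t\mapsto\alpha(t)$ in $\ell^2$ (hence in $\ell^\infty$) and the initial bound of Step~1, $T^\sharp>0$. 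For $t<T^\sharp$, Step~2 together with conservation of $M$ gives
\begin{align*}
 \|\alpha(t)\|_{\ell^2}^2 \leq 2|M(\alpha(t))| = 2|M(\alpha(0))| \leq 4\|\alpha(0)\|_{\ell^2}^2 \leq \tfrac{1}{25},
\end{align*}
and hence $\|\alpha(t)\|_{\ell^\infty}^2 \leq \tfrac{1}{25} < \tfrac12$. Thus the bootstrap hypothesis cannot be saturated, which forces $T^\sharp = T_*$, and in turn the a priori bound on $\|\alpha(t)\|_{\ell^2}$ forces $T_* = \infty$. Running the same argument for $t<0$ gives a global solution, and the final chain of inequalities is exactly \eqref{E:mass bound}.

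\textbf{Main obstacle.} In the defocusing case, $M(\alpha)$ is only finite as long as $|\alpha_n(t)|<1$ for every $n$, and the symplectic structure degenerates at $|\alpha_n|=1$. Absent the hypothesis \eqref{small h}, the coercivity $|M|\gtrsim \|\alpha\|_{\ell^2}^2$ would fail once $|\alpha_n|$ approached $1$, and $M$ would supply no control on $\|\alpha\|_{\ell^2}$. The role of \eqref{small h} is precisely to start the bootstrap strictly inside the region where $-\ln(1-x)\simeq x$, and the mild loss in constants between Steps 2 and 3 is absorbed so that the bootstrap closes.
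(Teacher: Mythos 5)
Your proposal is correct and follows essentially the same route as the paper: establish $\|\alpha(0)\|_{\ell^2}^2\leq\tfrac1{100}$ from \eqref{small h}, prove a two-sided comparison $|M|\simeq\|\alpha\|_{\ell^2}^2$ in the small-data regime (the paper does this via the power series of $\ln$ under $\|\alpha(t)\|_{\ell^2}^2\leq\tfrac1{20}$, you via elementary logarithm bounds under the weaker pointwise condition $\|\alpha(t)\|_{\ell^\infty}^2\leq\tfrac12$), and close with a continuity/bootstrap argument using conservation of $M$ to get the global a priori $\ell^2$ bound. The differences are only in bookkeeping constants and in which norm the bootstrap hypothesis is phrased.
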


\begin{proof}
Local well-posedness in $\ell^2$ is guaranteed by Picard's theorem.  Under the assumption \eqref{small h}, the quantity $M(\alpha(t))$ is initially well-defined; moreover, it is conserved by these local solutions.   We will show that such solutions can be extended globally in time by demonstrating the equivalence stated in \eqref{E:mass bound}.  The inequality in \eqref{E:mass bound} then follows from Lemma~\ref{Lem:norm-relation}.

From the power series expansion of $\ln$, we have 
\begin{align}\label{8:52}
\Bigl| \bigl| M(\alpha(t))\bigr|  - \nm{\ah(t)}_{\l_n^2}^2 \Bigr| \leq \sum_{\ell=2}^{\infty}\tfrac{1}{\ell}\nm{\ah(t)}_{\l^2_n}^{2\ell}.
\end{align}
On any time interval where 
\begin{align}\label{8:54}
\nm{\ah(t)}_{\l^2_n}^2\leq \tfrac1{20},
\end{align}
the series on the right-hand side of \eqref{8:52} converges and yields
\begin{align*}
\Bigl| \bigl| M(\alpha(t))\bigr|  - \nm{\ah(t)}_{\l_n^2}^2 \Bigr| \leq \tfrac{1}{20}\nm{\ah(t)}_{\l^2_n}^2
\end{align*}
and consequently,
\begin{align}\label{8:53}
\tfrac12 \bigl| M(\alpha(t))\bigr| \leq \nm{\ah(t)}_{\l_n^2}^2 \leq 2 \bigl| M(\alpha(t))\bigr|.
\end{align}

By Lemma~\ref{Lem:norm-relation}, \eqref{small h} guarantees
\begin{align*}
\|\alpha_0\|_{\ell^2_n}^2 \leq \tfrac1{100}.
\end{align*}
As trajectories are $\ell^2_n$-continuous in time, \eqref{8:53} and the conservation of mass show that \eqref{8:54}, and so also \eqref{8:53}, hold for all time.
\end{proof}

It is elementary to verify that the solutions constructed in Proposition~\ref{P:mass} also conserve $H$ and $H_2$, which are readily seen to be continuous functionals on $\ell^2_n$.
Our next result demonstrates suppression of the influence of the inflection points $\pm \frac\pi 2$ on the solution to \eqref{AL} for all times.

\begin{proposition}\label{P:suppression}
Let $\alpha$ be the solution to \eqref{AL} with initial data \eqref{E:initial data} and $h$ satisfying \eqref{small h}.   For $0<\delta<1$, let $P_\delta$ be the sharp Fourier cutoff defined via
\begin{align}\label{P defn}
\widehat {P_\delta f} (\theta) = \chi_{\mathcal G_\delta}(\theta) \hat f(\theta) \qtq{with} \mathcal G_\delta=\bigl\{\theta\in\R/2\pi\Z : \sin^2(\theta) < \delta^2\bigr\} .
\end{align}
Then
\begin{align}\label{E:suppress}
    \bnm{ [1-P_\delta]\ah(t)}_{\l^{2}_n}\ls \delta^{-1}\big(hN+h^{\frac{1}{2}}\big)\nm{\ah(0)}_{\l^2_{n}}.
\end{align}
\end{proposition}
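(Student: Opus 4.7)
The plan is to isolate a conserved quantity whose quadratic Fourier symbol is exactly $\sin^2\theta$, and then exploit the fact that this quantity is small initially because the Fourier support of $\alpha(0)$ lies inside $\mathcal G_{Nh}$.  Write $\sigma=+1$ in the defocusing case and $\sigma=-1$ in the focusing case, so that $\alpha_n\beta_n=\sigma|\alpha_n|^2$, and consider the conserved combination
\[
E(\alpha):=4\sigma M(\alpha)-H_2(\alpha).
\]
Expanding the logarithms in $M$ and $H_2$ by their Taylor series (valid because Proposition~\ref{P:mass} gives $\|\alpha(t)\|_{\ell^2}^2\leq\tfrac{1}{10}$ under \eqref{small h}), and applying Plancherel to the pieces $\sum|\alpha_n|^2$ and $2\Re\sum\alpha_{n+2}\beta_n$, the quadratic part of $E$ reduces to
\[
4\int_{-\pi}^{\pi}\bigl(1-\cos^2\theta\bigr)\babs{\widehat\alpha(\theta)}^2\tfrac{d\theta}{2\pi}=4\int_{-\pi}^{\pi}\sin^2\theta\,\babs{\widehat\alpha(\theta)}^2\tfrac{d\theta}{2\pi}.
\]
The remaining contributions to $E$ — namely the higher Taylor terms of the two logarithms together with the explicit quartic monomials $\alpha_n^2\beta_{n-1}^2$ and $\alpha_{n+1}\alpha_n\beta_n\beta_{n-1}$ in $H_2$ — are each pointwise dominated by a constant multiple of $|\alpha_n|^4+|\alpha_n|^6+\cdots$, and so by $\sum_n|\alpha_n|^4\leq\|\alpha\|_{\ell^2}^4$ (using $\ell^2\hookrightarrow\ell^\infty$ and summing the geometric series). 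Hence
\[
\Babs{E(\alpha)-4\int\sin^2\theta\,\babs{\widehat\alpha(\theta)}^2\tfrac{d\theta}{2\pi}}\lesssim\|\alpha\|_{\ell^2}^4.
\]

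Next I would estimate the quadratic part at $t=0$ using the explicit Fourier representation \eqref{data supp}: on the support of the $\psi_0$-piece one has $|\theta|\leq Nh$, on the support of the $\phi_0$-piece $|\theta-\pi|\leq Nh$, and in both cases $\sin^2\theta\lesssim(Nh)^2$. Therefore
\[
\int\sin^2\theta\,\babs{\widehat\alpha(0,\theta)}^2\tfrac{d\theta}{2\pi}\lesssim(Nh)^2\|\alpha(0)\|_{\ell^2}^2,
\]
and combining with the error bound yields $|E(\alpha(0))|\lesssim(Nh)^2\|\alpha(0)\|_{\ell^2}^2+\|\alpha(0)\|_{\ell^2}^4$. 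Since $E$ is conserved by the solutions from Proposition~\ref{P:mass}, this identity propagates to all $t$. Inverting the expansion at time $t$ and invoking mass conservation in the form $\|\alpha(t)\|_{\ell^2}^2\lesssim h$ to absorb the quartic error gives
\[
\int\sin^2\theta\,\babs{\widehat\alpha(t,\theta)}^2\tfrac{d\theta}{2\pi}\lesssim\bigl[(Nh)^2+h\bigr]\|\alpha(0)\|_{\ell^2}^2.
\]

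Finally, because $\sin^2\theta\geq\delta^2$ on $\mathcal G_\delta^c$, Plancherel and the definition of $P_\delta$ give
\[
\bnm{[1-P_\delta]\alpha(t)}_{\ell^2_n}^2=\int_{\mathcal G_\delta^c}\babs{\widehat\alpha(t,\theta)}^2\tfrac{d\theta}{2\pi}\leq\delta^{-2}\int\sin^2\theta\,\babs{\widehat\alpha(t,\theta)}^2\tfrac{d\theta}{2\pi},
\]
and taking a square root yields \eqref{E:suppress}. The only genuinely delicate point is verifying that \emph{every} higher-order contribution to $E$ is of size $\|\alpha\|_{\ell^2}^4$ rather than producing, say, a loss involving an additional unfavorable factor of $\delta^{-1}$; this is where the specific algebraic form of $H_2$ and the use of the $\ell^2\hookrightarrow\ell^4$ embedding (valid uniformly in $h$) are crucial. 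Once that bookkeeping is in place, the rest of the argument is essentially Plancherel plus the conservation of $M$ and $H_2$.
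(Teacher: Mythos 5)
Your overall strategy is the paper's: locate a conserved quantity whose quadratic part has Fourier symbol $\sin^2\theta$, bound the higher-order remainder by $\|\alpha\|_{\ell^2}^4\lesssim h\|\alpha(0)\|_{\ell^2}^2$, note that the quadratic part is initially $O\bigl((Nh)^2\|\alpha(0)\|_{\ell^2}^2\bigr)$ thanks to \eqref{data supp}, and conclude by Chebyshev on the Fourier side using $\sin^2\theta\geq\delta^2$ off $\mathcal G_\delta$. The gap is in the algebra of your conserved combination. With $\beta_n=\sigma\overline{\alpha_n}$, one has $M^{[2]}(\alpha)=\sigma\|\alpha\|_{\ell^2}^2$ and
\begin{equation*}
H_2^{[2]}(\alpha)=\sigma\Bigl[2\sum_n|\alpha_n|^2-2\Re\sum_n\alpha_{n+2}\overline{\alpha_n}\Bigr]
=\sigma\int_{-\pi}^{\pi}\bigl(2-2\cos(2\theta)\bigr)\,\babs{\widehat\alpha(\theta)}^2\,\tfrac{d\theta}{2\pi}
=\sigma\int_{-\pi}^{\pi}4\sin^2(\theta)\,\babs{\widehat\alpha(\theta)}^2\,\tfrac{d\theta}{2\pi},
\end{equation*}
so that
\begin{equation*}
E^{[2]}(\alpha)=4\sigma M^{[2]}(\alpha)-H_2^{[2]}(\alpha)=\int_{-\pi}^{\pi}\bigl(4-4\sigma\sin^2(\theta)\bigr)\,\babs{\widehat\alpha(\theta)}^2\,\tfrac{d\theta}{2\pi},
\end{equation*}
not $4\int\sin^2(\theta)\,|\widehat\alpha|^2\,\tfrac{d\theta}{2\pi}$. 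In the defocusing case this is $4\int\cos^2(\theta)\,|\widehat\alpha|^2\,\tfrac{d\theta}{2\pi}$, which is comparable to $\|\alpha(0)\|_{\ell^2}^2$ at $t=0$ (so not small) and carries no information near the inflection points; the conservation argument therefore collapses as written.

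The fix is to discard the $4\sigma M$ term entirely: $\sigma H_2$ by itself already has quadratic part $4\int\sin^2(\theta)\,|\widehat\alpha|^2\,\tfrac{d\theta}{2\pi}$, and this is exactly what the paper does (it works with $|H_2|$ and its quadratic part $\pm4\int\sin^2(\theta)\,|\widehat\alpha|^2\,\tfrac{d\theta}{2\pi}$). Your $E$ could in principle be salvaged by separately tracking $\|\alpha(t)\|_{\ell^2}^2$ via $M$ and subtracting, but you do not carry out that extra step. One further small imprecision: the monomial $\alpha_{n+1}\alpha_n\beta_n\beta_{n-1}$ is not pointwise dominated by $|\alpha_n|^4$ since it couples neighboring sites; instead bound the sum by H\"older, $\sum_n|\alpha_{n+1}|\,|\alpha_n|^2\,|\alpha_{n-1}|\leq\|\alpha\|_{\ell^4}^4\leq\|\alpha\|_{\ell^2}^4$. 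With these corrections your argument coincides with the paper's proof.
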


\begin{proof}
A straightforward computation reveals that the quadratic part of the conserved quantity $H_2(\alpha)$ is given by
\begin{align*}
H_2^{[2]}\bigl(\alpha(t)\bigr) = \pm\int_{-\pi}^\pi 4\sin^2(\theta)  |\widehat{\alpha}(t,\theta)|^2\,\tfrac{d\theta}{2\pi},
\end{align*}
while the higher order terms can be estimated using Proposition~\ref{P:mass} by
\begin{align*}
\Bigl| H_2\bigl(\alpha(t)\bigr) -H_2^{[2]}\bigl(\alpha(t)\bigr)\Bigr|
&\lesssim \|\alpha(t)\|_{\ell_n^2}^4 + \sum_{\ell\geq 2}  \|\alpha(t)\|_{\ell_n^2}^{2\ell} \lesssim \|\alpha(t)\|_{\ell_n^2}^4\lesssim h\|\alpha(0)\|_{\ell_n^2}^2.
\end{align*}
Using the conservation of $H_2$ and recalling \eqref{data supp}, we therefore deduce that
\begin{align*}
\int_{-\pi}^{\pi} 4\sin^2(\theta)|\widehat{\alpha}(t,\theta)|^2\,\tfrac{d\theta}{2\pi}&\lesssim\int_{-\pi}^\pi 4\sin^2(\theta) |\widehat{\alpha}(0,\theta)|^2\,\tfrac{d\theta}{2\pi} +h\|\alpha(0)\|_{\ell_n^2}^2\\
&\lesssim [(hN)^2 + h]\|\alpha(0)\|_{\ell_n^2}^2.
\end{align*}
This completes the proof of the proposition.
\end{proof}

A convenient way of understanding the whole family of conservation laws is through their generating function, which we will discuss next, following the paradigm set forth in \cite{Harrop-Griffiths.Killip.Visan2021}.

For $z\in \C$ with $|z|>1$, we define
\begin{align} \label{A(z)-def}
\mathbf{A}(z; \alpha) :=&\, \sum_{\ell=1}^{\infty}\tfrac{(-1)^{\ell+1}}{\ell}\tr\big\{(\Lambda\Gamma)^\ell\big\} 
\end{align}
where 
\begin{gather*}
\Lambda(z;\alpha) := \alpha (S-z^{-1})^{-1}, \qquad \Gamma(z;\alpha)  := \beta (z-S)^{-1},
\end{gather*}
and $S$ denotes the shift operator on $\ell^2(\Z)$ given by $\big(S[f]\big)_n=f_{n+1}$.

In view of these definitions of $\Lambda$ and $\Gamma$, we see that  \eqref{A(z)-def} has the form of a power series in $\alpha$ and $\bar \alpha$.  The convergence of this series will be addressed in Proposition~\ref{P:F is}.  In understanding what the conservation of $\mathbf A$ expresses about solutions to \eqref{AL}, it is useful to compute the quadratic term $\mathbf A^{[2]}$ exactly:
$$
\mathbf A^{[2]}(z; \alpha)=\tr\big\{\Lambda\Gamma \big\}=\pm\int_{-\pi}^{\pi} \frac{ |\widehat{\alpha}(\theta)|^2}{z^2e^{-\ui\theta}-1}\,\frac{d\theta}{2\pi}.
$$
Although this quantity is not coercive, one may remedy this by taking linear combinations with $M$.  The particular linear combination that will be useful to us~is 
\begin{align}\label{F defn}
\mathbf G(\kappa h;\alpha) = \pm \tfrac{2}{e^{4\kappa h} +1} M(\alpha) \mp \tanh(2\kappa h) \Re\Bigl[ \mathbf{A}(e^{\kappa h}; \alpha) + \mathbf{A}(\ui e^{\kappa h}; \alpha) \Bigr],
\end{align}
which has quadratic part
\begin{align}\label{F[2]}
\mathbf G^{[2]}(\kappa h;\alpha) = \int_{-\pi}^{\pi} \frac{\sin^2(\theta)\, |\widehat{\alpha}(\theta)|^2}{\sinh^2(2\kappa h) + \sin^2(\theta)}\,\frac{d\theta}{2\pi}.
\end{align}

\begin{proposition}\label{P:F is}
Let $\alpha$ be the solution to \eqref{AL} with initial data \eqref{E:initial data} and $h$ satisfying \eqref{small h}. Then there exists $\kappa_0>0$ depending only on $\|\psi_0\|_{L_x^2}$ and $\|\phi_0\|_{L_x^2}$ so that for all $t\in \R$ and all $\kappa\geq\kappa_0$,  the series defining $\mathbf G(\kappa h;\alpha(t))$ converges and is independent of $t$.  Moreover,
\begin{align}\label{E:F is F2}
\Bigl| \mathbf G(\kappa h;\alpha(t)) - \mathbf G^{[2]}(\kappa h;\alpha(t)) \Bigr|\lesssim \sinh^{-1}(\kappa h) \|\alpha(0)\|_{\ell^2_n}^4,
\end{align}
with the implicit constant independent of $h$.
\end{proposition}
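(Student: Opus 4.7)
The plan is to control the series \eqref{A(z)-def} via Hilbert--Schmidt norms of $\Lambda$ and $\Gamma$, use Proposition~\ref{P:mass} to force $\kappa$ into the domain of absolute convergence, and read \eqref{E:F is F2} off the geometric tail.

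First I would compute the Hilbert--Schmidt norms explicitly. Using the Neumann expansions $(S-z^{-1})^{-1}=\sum_{k\ge 0}z^{-k}S^{-k-1}$ and $(z-S)^{-1}=\sum_{k\ge 0}z^{-k-1}S^{k}$, valid for $|z|>1$, together with $(S^{j})_{nm}=\de_{m,n+j}$, a short geometric--sum computation gives
\begin{equation*}
\nm{\Lambda(z;\ah)}_{\j_2}^{2}=\frac{|z|^{2}}{|z|^{2}-1}\nm{\ah}_{\l^{2}_n}^{2} \qtq{and} \nm{\Gamma(z;\ah)}_{\j_2}^{2}=\frac{1}{|z|^{2}-1}\nm{\bh}_{\l^{2}_n}^{2}.
\end{equation*}
At $z\in\{e^{\kappa h},\ui e^{\kappa h}\}$ both moduli equal $e^{\kappa h}$, so $\nm{\Lambda}_{\j_2}\nm{\Gamma}_{\j_2}=(2\sinh(\kappa h))^{-1}\nm{\ah}_{\l^{2}_n}^{2}$, and the trace inequality from Section~\ref{S:2} bounds the $\l$th summand in \eqref{A(z)-def} by $\tfrac{1}{\l}\bigl((2\sinh(\kappa h))^{-1}\nm{\ah}_{\l^{2}_n}^{2}\bigr)^{\l}$.

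Next, Proposition~\ref{P:mass} gives $\nm{\ah(t)}_{\l^{2}_n}^{2}\lesssim h\bigl(\nm{\psi_0}_{L^{2}_x}^{2}+\nm{\phi_0}_{L^{2}_x}^{2}\bigr)$ uniformly in $t$, and $\sinh(\kappa h)\ge \kappa h$ lets one pick $\kappa_0=\kappa_0(\nm{\psi_0}_{L^{2}_x},\nm{\phi_0}_{L^{2}_x})$ large enough that $(2\sinh(\kappa h))^{-1}\nm{\ah(t)}_{\l^{2}_n}^{2}\le \tfrac12$ for every $t\in\R$ and $\kappa\ge\kappa_0$. This gives absolute convergence of both series defining $\mathbf A(e^{\kappa h};\ah(t))$ and $\mathbf A(\ui e^{\kappa h};\ah(t))$. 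Time--independence of $\mathbf G(\kappa h;\ah(t))$ then reduces to the conservation of $M$ (already used in Proposition~\ref{P:mass}) together with the conservation of $\mathbf A(z;\ah)$ along the \eqref{AL} flow; the latter is part of the integrable package in the framework of \cite{Harrop-Griffiths.Killip.Visan2021}, or alternatively one can Poisson--bracket \eqref{A(z)-def} term by term with the Hamiltonian $H$.

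Finally, the elementary inequality $\sum_{\l\ge 2}\tfrac{x^{\l}}{\l}\le x^{2}$ for $0<x\le\tfrac12$ applied to the tail of \eqref{A(z)-def} gives
\begin{equation*}
\bigl|\mathbf A(z;\ah(t))-\mathbf A^{[2]}(z;\ah(t))\bigr|\lesssim \frac{\nm{\ah(t)}_{\l^{2}_n}^{4}}{\sinh^{2}(\kappa h)}\qtq{for}z\in\{e^{\kappa h},\ui e^{\kappa h}\},
\end{equation*}
while expanding $\ln$ as in \eqref{8:52} gives $|M(\ah(t))-M^{[2]}(\ah(t))|\lesssim\nm{\ah(t)}_{\l^{2}_n}^{4}$. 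Substituting into \eqref{F defn}, the $M$--remainder carries the weight $\tfrac{2}{e^{4\kappa h}+1}$ and the $\mathbf A$--remainder the weight $\tanh(2\kappa h)$. The main obstacle is the ensuing regime--matching: one must verify $\tfrac{1}{e^{4\kappa h}+1}\lesssim \sinh^{-1}(\kappa h)$ and $\tfrac{\tanh(2\kappa h)}{\sinh^{2}(\kappa h)}\lesssim \sinh^{-1}(\kappa h)$ uniformly in $\kappa h>0$, both of which are routine once one separates $\kappa h\lesssim 1$ (where $\sinh(\kappa h)\simeq \kappa h$ and $\tanh(2\kappa h)\lesssim \kappa h$) from $\kappa h\gtrsim 1$ (where $\sinh(\kappa h)\simeq e^{\kappa h}/2$ and $(e^{4\kappa h}+1)^{-1}\lesssim e^{-4\kappa h}$). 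Combining these bounds yields \eqref{E:F is F2}.
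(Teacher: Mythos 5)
Your proposal is correct and follows essentially the same route as the paper: Hilbert--Schmidt bounds on $\Lambda$ and $\Gamma$ (which the paper imports from \cite[Lemma 5.2, Theorem 5.1]{Harrop-Griffiths.Killip.Visan2021} rather than rederiving via Neumann series), Proposition~\ref{P:mass} to place $\kappa\geq\kappa_0$ in the regime of absolute convergence and conservation, and a geometric-tail estimate combined with the $\ln$ expansion \eqref{8:52} to obtain \eqref{E:F is F2}. The hyperbolic-function bookkeeping you carry out explicitly at the end matches the paper's $\tanh(2\kappa h)\sinh^{-2}(\kappa h)+\cosh^{-1}(2\kappa h)\lesssim\sinh^{-1}(\kappa h)$ step.
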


\begin{proof}
It was observed in \cite[Lemma 5.2]{Harrop-Griffiths.Killip.Visan2021} that $\Lambda$ and $\Gamma$ are Hilbert--Schmidt operators with 
\begin{gather}\label{AL HS}
\bigl\| \Lambda(z;\alpha) \bigr\|_{\j_2}^2 = \tfrac{|z|^2}{|z|^2 - 1} \|\alpha\|_{\ell^2_n}^2
\qtq{and}
\bigl\| \Gamma(z;\alpha)   \bigr\|_{\j_2}^2  = \tfrac{1}{|z|^2 - 1} \|\alpha\|_{\ell^2_n}^2.
\end{gather}
Moreover, by \cite[Theorem 5.1]{Harrop-Griffiths.Killip.Visan2021}, $\mathbf{A}(z;\alpha)$ converges and is conserved under the \eqref{AL} flow provided that
$$
\tfrac{|z|}{|z|^2-1}\|\alpha\|_{\ell^2_n}^2<1.
$$
Combining this \eqref{8:52}, \eqref{A(z)-def}, and Proposition~\ref{P:mass}, we may estimate
\begin{align*}
\text{LHS}\eqref{E:F is F2}
&\leq \tanh(2\kappa h)\sum_{\ell\geq 2} \tfrac1\ell\bigl\| \Lambda(e^{\kappa h};\alpha(t)) \bigr\|_{\j_2}^\ell\bigl\| \Gamma(e^{\kappa h};\alpha(t)) \bigr\|_{\j_2}^\ell \\
&\quad + \tanh(2\kappa h)\sum_{\ell\geq 2} \tfrac1\ell \bigl\| \Lambda(\ui e^{\kappa h};\alpha(t)) \bigr\|_{\j_2}^\ell\bigl\| \Gamma(\ui e^{\kappa h};\alpha(t)) \bigr\|_{\j_2}^\ell\\
&\quad +  \tfrac{2}{e^{4\kappa h} + 1}\sum_{\ell\geq 2}\tfrac{1}{\ell}\nm{\ah(t)}_{\l^2_n}^{2\ell}\\
&\leq \tanh(2\kappa h)\sum_{\ell\geq 2} \Bigl(\tfrac{e^{\kappa h}}{e^{2\kappa h}-1}  \|\alpha(t)\|_{\ell^2_n}^2\Bigr)^{\ell} + \tfrac{1}{\cosh(2\kappa h)}\|\alpha(t)\|_{\ell^2_n}^4\lesssim \text{RHS}\eqref{E:F is F2},
\end{align*}
provided $\kappa\geq \kappa_0$ for some $\kappa_0$ depending only on the $L^2_x$ norms of $\phi_0,\psi_0$.
\end{proof}

\section{Strichartz Estimates}\label{Sec:Strichartz}

Our main goal in this section is to prove certain Strichartz estimates for $\alpha$, $\psi^h$, and $\phi^h$ that are uniform in the parameter $h$.  We start by recording the Strichartz estimates for the continuum and discrete Schr\"odinger propagators.

\begin{proposition}[Strichartz estimates for $e^{\ui t\D}$; \cite{GV}]\label{P:Strichartz}
Let $(q,r)$ and $(\tilde q, \tilde r)$ be two pairs satisfying
\begin{align*}
q,r, \tilde q, \tilde r\geq2 \quad\text{and} \quad \tfrac{2}{q}+\tfrac{1}{r}=\tfrac{1}{2}=\tfrac{2}{\tilde q}+\tfrac{1}{\tilde r}.
\end{align*}
If $\psi$ is a solution to the Schr\"odinger equation
\begin{align*}
\ui\dt\psi=-\D \psi+F
\end{align*}
with initial data $\psi_0\in L^2_x(\r)$, then
\begin{align*}
\nm{\psi}_{L_t^qL^r_x(\R\times\r)}\ls\nm{\psi_0}_{L^2_x(\r)}+\nm{F}_{L_t^{\tilde q'}L_x^{\tilde r'}(\R\times\r)}.
\end{align*}
\end{proposition}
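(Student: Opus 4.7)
The plan is to follow the classical $TT^*$ strategy of Ginibre and Velo. The starting point is the one-dimensional dispersive bound
\[
\|e^{\ui t\D} f\|_{L^\infty_x} \lesssim |t|^{-\frac12} \|f\|_{L^1_x},
\]
read off directly from the explicit convolution kernel $(4\pi\ui t)^{-1/2} e^{\ui |x-y|^2/(4t)}$ of the free Schr\"odinger propagator on $\R$. Combined with the $L^2$-unitarity $\|e^{\ui t\D} f\|_{L^2_x} = \|f\|_{L^2_x}$, Riesz-Thorin interpolation produces
\[
\|e^{\ui t\D} f\|_{L^r_x} \lesssim |t|^{-(\frac12 - \frac1r)} \|f\|_{L^{r'}_x}, \qquad 2\leq r\leq\infty.
\]

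For the homogeneous estimate, set $U(t) := e^{\ui t\D}$ and dualize: the inequality $\|Uf\|_{L^q_t L^r_x} \lesssim \|f\|_{L^2_x}$ is equivalent to
\[
\Bigl\| \int_{\R} U(t)U^*(s) F(s)\, ds \Bigr\|_{L^q_t L^r_x} \lesssim \|F\|_{L^{q'}_t L^{r'}_x}.
\]
Since $U(t)U^*(s) = e^{\ui(t-s)\D}$, the interpolated dispersive estimate bounds the inner $L^r_x$ norm by $|t-s|^{-(\frac12 - \frac1r)}\|F(s)\|_{L^{r'}_x}$. One-dimensional Hardy--Littlewood--Sobolev then shows that convolution against $|t|^{-\alpha}$ maps $L^{q'}_t$ to $L^q_t$ exactly when $\alpha = \frac{2}{q}$; matching $\alpha = \frac12 - \frac1r$ yields precisely the admissibility relation $\frac{2}{q} + \frac{1}{r} = \frac12$ and closes the homogeneous case.

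For the inhomogeneous piece, Duhamel's formula gives $\psi(t) = U(t)\psi_0 - \ui\int_0^t U(t-s) F(s)\, ds$, and the first term is controlled by the homogeneous estimate just established. The untruncated integral factors as $\int_{\R} U(t-s) F(s)\, ds = U(t) \int_{\R} U^*(s) F(s)\, ds$, which is handled by composing the dual of the homogeneous estimate for $(\tilde q, \tilde r)$ with the homogeneous estimate for $(q,r)$. The Christ--Kiselev lemma then upgrades this bilinear bound to the retarded integral $\int_0^t$, losing nothing provided $q > \tilde q'$. The main technical subtlety is the one-dimensional endpoint $(q,r) = (4,\infty)$, which the hypothesis $q,r,\tilde q,\tilde r \geq 2$ formally admits but for which the $TT^*$ scheme degenerates; this is the well-known open endpoint not reached by the abstract Keel--Tao machinery. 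In the present paper the proposition is invoked only for admissible pairs strictly in the interior of this range (typically $(6,6)$ and $(\infty,2)$), so the standard Ginibre--Velo argument above suffices.
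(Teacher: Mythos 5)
The paper does not prove this proposition at all: it is quoted from the literature, with the citation to Ginibre--Velo standing in for the argument. Your $TT^*$ outline --- the $|t|^{-1/2}$ dispersive bound from the explicit kernel, interpolation against unitarity, Hardy--Littlewood--Sobolev for the diagonal homogeneous estimate, composition of the homogeneous estimate with its dual for the untruncated inhomogeneous term, and Christ--Kiselev for the retarded integral (applicable since admissibility in one dimension forces $q,\tilde q\geq 4$, hence $\tilde q'\leq \tfrac43<4\leq q$) --- is exactly the standard route and is sound.

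The one genuine error is your closing caveat. In one dimension $(q,r)=(4,\infty)$ is \emph{not} an open endpoint, and the $TT^*$ scheme does not degenerate there: the relevant HLS exponent is $\alpha=\tfrac2q=\tfrac12-\tfrac1r=\tfrac12$, strictly inside the admissible range $0<\alpha<1$, and the duality $(L^{4/3}_tL^1_x)^*=L^4_tL^\infty_x$ is unproblematic. The case excluded by Keel--Tao is $(q,r,\sigma)=(2,\infty,1)$, i.e.\ $(2,\infty)$ in two space dimensions, where the estimate genuinely fails; for $d=1$ one has $\sigma=\tfrac12$ and the smallest admissible $q$ is $4$, so every pair permitted by the hypotheses is reached by the non-endpoint argument you describe. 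Your proof therefore already establishes the proposition in the full generality stated, and you should delete the disclaimer rather than retreat to interior pairs. This is not purely cosmetic: the frequency-localized discrete analogue, Proposition~\ref{P:loc Strichartz}, carries the same admissibility condition and is applied in the paper precisely at the pair $(4,\infty)$, so one should not leave the impression that this exponent is out of reach.
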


As shown in \cite[Theorem~3]{Stefanov.Kevrekidis2005}, Strichartz estimates for the discrete Schr\"odinger propagator $e^{\ui t\DD}$, where $\DD$ denotes the discrete Laplacian
$$
[\DD \alpha]_n= \alpha_{n-1} - 2\alpha_n + \alpha_{n+1},
$$
can be derived via the same techniques used in the continuum case, \cite{KeelTao}.  As the discrete dispersion relation has inflection points (unlike in the continuum case), the  estimates are more closely related to those familiar from the Airy propagator:

\begin{proposition}[Strichartz estimates for $e^{\ui t\DD}$; \cite{Stefanov.Kevrekidis2005}] \label{P:discrete Strichartz}
Let $(q,r)$ and $(\tilde q, \tilde r)$ be two pairs satisfying
\begin{align*}
q,r, \tilde q, \tilde r\geq2 , \qquad \tfrac{1}{q}+\tfrac{1}{3r}\leq \tfrac{1}{6}, \qquad\text{and} \qquad \tfrac{1}{\tilde q}+\tfrac{1}{3\tilde r}\leq \tfrac{1}{6}.
\end{align*}
If $\alpha$ is a solution to the discrete Schr\"odinger equation
\begin{align*}
\ui\dt\ah_n=-(\DD\ah)_n+F_n
\end{align*}
with initial data $\ah_0\in\l^2_n(\z)$, then
\begin{align*}
\nm{\ah}_{L_t^q\l^r_n(\R\times\z)}\ls\nm{\ah_{0}}_{\l^2_n(\z)}+\bnm{F}_{L_t^{\tilde q'}\l_n^{\tilde r'}(\R\times\z)}.
\end{align*}
\end{proposition}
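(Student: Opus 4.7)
The proof plan follows the Keel--Tao blueprint \cite{KeelTao} adapted to the lattice, as carried out in \cite{Stefanov.Kevrekidis2005}. My starting point is the free propagator kernel
\begin{align*}
    K_t(n) = \int_{-\pi}^{\pi} e^{\ui n\theta + \ui t(2\cos\theta - 2)}\,\tfrac{d\theta}{2\pi},
\end{align*}
whose pointwise size is governed by the oscillatory behavior of the phase $\phi(\theta)=(n/t)\theta + 2\cos\theta$. The second derivative $\phi''(\theta)=-2\cos\theta$ \emph{vanishes} precisely at the inflection points $\theta=\pm\tfrac{\pi}{2}$, but the third derivative $\phi'''(\theta)=2\sin\theta$ is bounded below there. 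Partitioning the $\theta$-integral into a region where $|\phi''|\gtrsim 1$ (handled by standard stationary phase, contributing $|t|^{-1/2}$) and neighborhoods of the inflection points (handled by van der Corput's lemma of order three, contributing $|t|^{-1/3}$) yields the uniform bound
\begin{align*}
    \sup_{n\in\Z} |K_t(n)| \ls \langle t\rangle^{-1/3}.
\end{align*}
This Airy-type decay, combined with the trivial $\ell^2_n$-conservation $\|e^{\ui t\DD}\alpha\|_{\ell^2_n}=\|\alpha\|_{\ell^2_n}$, supplies the two hypotheses of the abstract Keel--Tao framework with decay exponent $\sigma=\tfrac{1}{3}$.

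Feeding these inputs into Keel--Tao yields Strichartz bounds (both homogeneous and inhomogeneous) on the \emph{sharp} admissibility line $\tfrac{1}{q}+\tfrac{1}{3r}=\tfrac{1}{6}$ with $q,r\geq 2$; because $\sigma<1$ the sharp line is confined to $q\geq 6$, so the delicate $q=2$ endpoint of Keel--Tao is never encountered. To upgrade from equality to the stated inequality $\tfrac{1}{q}+\tfrac{1}{3r}\leq\tfrac{1}{6}$, I would exploit a feature specific to the lattice: the trivial inclusion $\ell^p_n\hookrightarrow\ell^r_n$ available whenever $p\leq r$. Given any pair $(q,r)$ with strict inequality, I locate a sharp-admissible pair $(q,p)$ with $p\leq r$, apply the sharp Strichartz estimate in $L^q_t\ell^p_n$, and embed into $L^q_t\ell^r_n$; the dual inclusion $\ell^{\tilde r'}_n\hookrightarrow\ell^{\tilde p'}_n$ handles the forcing side, alternatively via the Christ--Kiselev lemma.

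The single genuine obstacle is the dispersive estimate itself --- specifically, the degradation from the $|t|^{-1/2}$ decay one would naively expect from a one-dimensional Schr\"odinger analogue down to the $|t|^{-1/3}$ Airy-type decay forced by the vanishing of $\phi''$ at the inflection points. This loss is precisely what dictates the appearance of $r$ weighted by $\tfrac{1}{3}$ (rather than $\tfrac{1}{2}$) in the admissibility condition, and it is the reason why the full strength of continuum Strichartz is unavailable for the discrete propagator. Everything downstream --- the $TT^*$/bilinear interpolation, the Christ--Kiselev reduction for the retarded estimate, and the lattice-embedding enlargement of the admissibility region --- is essentially mechanical once this decay has been secured.
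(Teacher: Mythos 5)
Your proposal is correct and follows exactly the route the paper relies on: the paper does not prove this proposition itself but cites \cite{Stefanov.Kevrekidis2005}, whose argument is precisely the one you outline --- the $\langle t\rangle^{-1/3}$ dispersive bound obtained by splitting the kernel integral between stationary-phase regions and van der Corput (order three) neighborhoods of the inflection points $\theta=\pm\frac\pi2$, fed into the Keel--Tao machinery with $\sigma=\frac13$. Your upgrade from the sharp line to the full admissibility region via the lattice embedding $\ell^p_n\hookrightarrow\ell^r_n$ (and its dual on the forcing side) is also the standard and correct way to obtain the inequality form stated here.
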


However, if we project to frequencies away from the inflection points, we recover the same dispersive decay we find in the continuum.  Consequently, we have

\begin{proposition}[Frequency-localized Strichartz estimates for $e^{\ui t\DD}$]\label{P:loc Strichartz}
Let $(q,r)$ and $(\tilde q, \tilde r)$ be two pairs satisfying
\begin{align*}
q,r, \tilde q, \tilde r\geq2 \quad\text{and} \quad \tfrac{2}{q}+\tfrac{1}{r}=\tfrac{1}{2}=\tfrac{2}{\tilde q}+\tfrac{1}{\tilde r}.
\end{align*}
The solution to the discrete Schr\"odinger equation
\begin{align*}
\ui\dt\ah_n=-(\DD\ah)_n+F_n
\end{align*}
with initial data $\ah_0\in\l^2_n(\z)$ satisfies
\begin{align*}
\nm{P\ah}_{L_t^q\l^r_n(\R\times\z)}\ls\nm{P\ah_{0}}_{\l^2_n(\z)}+\bnm{PF}_{L_t^{\tilde q'}\l_n^{\tilde r'}(\R\times\z)},
\end{align*}
where $P=P_{\frac34}$ is the projection operator defined in Proposition~\ref{P:suppression}.
\end{proposition}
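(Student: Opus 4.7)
The plan is to establish the improved dispersive estimate
\begin{align*}
\bnm{e^{\ui t\DD}\, P f}_{\l^\infty_n} \ls |t|^{-\frac{1}{2}} \bnm{f}_{\l^1_n}
\end{align*}
for the frequency-localized propagator, and then combine it with the trivial $\l^2$-boundedness of $e^{\ui t\DD}P$ via the $TT^*$ framework of Keel--Tao. Because $P$ is a real Fourier multiplier, it commutes with $\DD$, is self-adjoint, and is idempotent; applying $P$ to Duhamel's formula therefore gives
\begin{align*}
P\ah(t) = e^{\ui t\DD}\,P\ah_0 - \ui\int_0^t e^{\ui(t-s)\DD}\, PF(s)\,ds,
\end{align*}
which realizes $P\ah$ as the output of the frequency-restricted propagator $U(t) := e^{\ui t\DD}P$ acting on $P\ah_0$ and $PF$. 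Since $U(t)U(s)^* = e^{\ui(t-s)\DD}P$, the dispersive bound above is precisely the Keel--Tao hypothesis needed to derive Strichartz estimates at the Schr\"odinger-admissible exponents $\tfrac{2}{q}+\tfrac{1}{r}=\tfrac{1}{2}$.

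For the dispersive estimate, I would study the convolution kernel
\begin{align*}
K_t(n) = \int_{\mathcal G_{3/4}} e^{\ui[n\theta + 2t\cos\theta]}\,\tfrac{d\theta}{2\pi},
\end{align*}
where the omitted unimodular factor $e^{-2\ui t}$ comes from the constant part of the symbol of $\DD$. The set $\mathcal G_{3/4}$ is a union of two arcs on the circle, centered at $\theta=0$ and $\theta=\pi$. The phase $\phi(\theta) := n\theta + 2t\cos\theta$ has $|\phi''(\theta)| = 2|t||\cos\theta| \geq \tfrac{\sqrt 7}{2}|t|$ throughout the support, since the defining inequality $\sin^2\theta < \tfrac{9}{16}$ of $\mathcal G_{3/4}$ forces $|\cos\theta| > \tfrac{\sqrt 7}{4}$. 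Applying van der Corput's second-derivative lemma on each arc with amplitude identically one --- so that the sharp cutoff contributes nothing at the boundary --- yields $|K_t(n)| \ls |t|^{-1/2}$ uniformly in $n \in \z$, which is the claimed dispersive bound.

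Combining this with the $\l^2_n$-boundedness of $P$ (a consequence of the discrete M.~Riesz theorem recalled in Section~\ref{S:2}) and the unitarity of $e^{\ui t\DD}$ verifies all hypotheses of Keel--Tao. The abstract theorem then produces the full range of Schr\"odinger-admissible homogeneous and retarded inhomogeneous Strichartz estimates for $U(t)$, and the Duhamel identity above translates these into the stated estimate. The main analytical point --- and the one where the cutoff $P_{3/4}$ is essential --- is the uniform lower bound $|\phi''|\gtrsim|t|$ across the support of $P$; this is precisely the nondegeneracy that is lost near the inflection points $\theta=\pm\tfrac{\pi}{2}$, whose presence in Proposition~\ref{P:discrete Strichartz} forces the Airy-type decay and the more restrictive admissibility condition recorded there.
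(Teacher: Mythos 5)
Your proposal is correct and is exactly the argument the paper intends: the paper offers no written proof beyond the remark that projecting away from the inflection points restores the continuum dispersive decay, and your van der Corput computation ($|\phi''|=2|t||\cos\theta|\gtrsim|t|$ on $\mathcal G_{3/4}$) followed by Keel--Tao is the standard way to substantiate that remark. The only quibble is cosmetic: the $\ell^2$-boundedness of $P$ is immediate from Plancherel, with no need to invoke the discrete M.~Riesz theorem.
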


Our main result in this section is the following:

\begin{proposition} \label{Prop:strichartz-bounds} Let $\alpha$ be the solution to \eqref{AL} with initial data \eqref{E:initial data} and $h$ satisfying \eqref{small h}.
Then for any $T>0$ we have 
\begin{align}\label{2:15}
    \nm{\ah}_{L^6_t\l^6_n([-h^{-2}T, h^{-2}T]\times\Z)} + \nm{\ah}_{L^4_t\l^\infty_n ([-h^{-2}T, h^{-2}T]\times\Z)} \lesssim_T \nm{\ah_0}_{\l^2_n},
\end{align}
where the implicit constant is independent of $h$.  Consequently,
\begin{align*}
\bnm{\ph}_{L^6_{t,x}([-T,T]\times\r)}+ \bnm{\phi^h}_{L^6_{t,x}([-T,T]\times\r)}\lesssim_T 1 \qquad\text{uniformly in h},
\end{align*}
where $\psi^h$ and $\phi^h$ are as defined in \eqref{psi phi def}.
\end{proposition}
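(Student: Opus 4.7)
My plan is a bootstrap argument based on the frequency decomposition $\alpha = P\alpha + Q\alpha$ from Proposition~\ref{P:suppression} (with $P = P_{3/4}$). The good part $P\alpha$ is supported away from the inflection points and obeys the Schr\"odinger-type Strichartz of Proposition~\ref{P:loc Strichartz}, while $Q\alpha$ sits near $\pm\pi/2$ where only the Airy-type estimates of Proposition~\ref{P:discrete Strichartz} apply; crucially, neither $(4,\infty)$ nor $(6,6)$ is Airy-admissible, so the $Q$-piece requires separate handling.

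For $P\alpha$, I would apply Duhamel's formula to $\ui\dt\alpha = -\DD\alpha + F$ with $F_n = \alpha_n\beta_n(\alpha_{n-1}+\alpha_{n+1})$, project by $P$, and invoke Proposition~\ref{P:loc Strichartz} for the Schr\"odinger pairs $(4,\infty)$ and $(6,6)$. Using $|F_n| \leq 2\|\alpha\|_{\ell^\infty_n}|\alpha_n|^2$ and the mass bound $\|\alpha(t)\|_{\ell^2}^2 \lesssim h$ from Proposition~\ref{P:mass}, a H\"older-in-time estimate gives $\|F\|_{L^{4/3}_t\ell^1_n(I)} \lesssim h|I|^{1/2}\|\alpha\|_{L^4_t\ell^\infty_n(I)}$, which can be absorbed whenever $|I| \lesssim h^{-2}$. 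Partitioning $[-h^{-2}T, h^{-2}T]$ into $O(T)$ intervals of this length, resetting via mass conservation at endpoints, and summing in $L^q_t$ yields $\|P\alpha\|_{L^6\ell^6 \cap L^4\ell^\infty} \lesssim_T \|\alpha_0\|_{\ell^2}$.

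The main obstacle is estimating $Q\alpha$. Two inputs are essential: first, $Q\alpha_0 = 0$ once $h$ is small enough that $Nh < \arcsin(3/4)$, since the Fourier support of $\alpha_0$ lies in $\mathcal{G}_{3/4}$; second, the smallness $\|Q\alpha\|_{L^\infty\ell^2} \lesssim (hN + h^{1/2})\|\alpha_0\|_{\ell^2}$ from Proposition~\ref{P:suppression}. Applying Airy Strichartz with the pair $(6,\infty)$ and its dual $(1,2)$, and bounding $\|F\|_{\ell^2} \lesssim \|\alpha\|_{\ell^\infty}^2\|\alpha\|_{\ell^2}$, I obtain $\|Q\alpha\|_{L^6\ell^\infty(I)} \lesssim \|Q\alpha(t_0)\|_{\ell^2} + h^{3/2}|I|^{1/2}$ on any interval $I$. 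H\"older interpolates this to $L^4\ell^\infty$ with the loss $|I|^{1/12}$, and by choosing intervals of length $|I|\sim h^{-2\gamma}$ to balance the linear initial-data contribution (of size $h^{3/2-\gamma}$ for $\gamma > 1/2$) against the nonlinear contribution $h^{3/2}|I|^{1/2}$, I arrive at the per-interval bound $\|Q\alpha\|_{L^4\ell^\infty(I)} \lesssim h^{3/2-7\gamma/6}$; summing in $L^4$ over the $O(h^{2\gamma-2}T)$ such intervals produces $\|Q\alpha\|_{L^4\ell^\infty} \lesssim_T h^{1-2\gamma/3}$. The constraint $\gamma \leq 13/18$ is exactly what makes this comparable to $\|\alpha_0\|_{\ell^2} \simeq h^{1/2}$; the matching $L^6\ell^6$ estimate follows from the elementary interpolation
\begin{equation*}
\|Q\alpha\|_{L^6\ell^6} \leq \|Q\alpha\|_{L^\infty\ell^2}^{1/3}\|Q\alpha\|_{L^4\ell^\infty}^{2/3}.
\end{equation*}

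Finally, for the consequence: $\psi^h$ is bandlimited to $|\xi|<\pi/(2h)$ and satisfies $\psi^h(t,nh) = h^{-1}\tilde P\alpha_n(h^{-2}t)$ where $\tilde P$ is the sharp cutoff to $|\theta|<\pi/2$, so Plancherel--Polya (Lemma~\ref{Lem:norm-relation}) together with the time rescaling $s = h^{-2}t$ gives $\|\psi^h\|_{L^6_{t,x}([-T,T])} \simeq h^{-1/2}\|\tilde P\alpha\|_{L^6\ell^6([-h^{-2}T,h^{-2}T])}$. The $\ell^6$-boundedness of the discrete Hilbert transform yields $\|\tilde P\alpha\|_{L^6\ell^6} \lesssim \|\alpha\|_{L^6\ell^6} \lesssim_T \|\alpha_0\|_{\ell^2} \simeq h^{1/2}$, hence $\|\psi^h\|_{L^6_{t,x}} \lesssim_T 1$; the argument for $\phi^h$ is identical after shifting Fourier support from $0$ to $\pi$.
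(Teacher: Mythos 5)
Your strategy is sound and reaches the conclusion by a genuinely different organization than the paper's. The paper keeps a single Duhamel formula for $\alpha$ and splits the \emph{nonlinearity} as $F(\alpha)=F(P_{1/100}\alpha)+\bigl[F(\alpha)-F(P_{1/100}\alpha)\bigr]$, exploiting the algebraic closure $P_{3/4}F(P_{1/100}\alpha)=F(P_{1/100}\alpha)$ so that the main term is handled entirely by the Schr\"odinger-type estimate of Proposition~\ref{P:loc Strichartz}, while the error terms carry the small factor $(Nh+h^{1/2})$ from Proposition~\ref{P:suppression} and are estimated by the Airy-type Proposition~\ref{P:discrete Strichartz} at the non-sharp pair $(9,6)$, paying powers of $h^{-2}T_0$; the bootstrap is run in $L^6_t\ell^6_n$ alone and the $L^4_t\ell^\infty_n$ bound is derived afterwards. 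You instead split the \emph{solution}, $\alpha=P\alpha+Q\alpha$, and control $Q\alpha$ by restarting the Airy estimate on short intervals $|I|\sim h^{-2\gamma}$, using Proposition~\ref{P:suppression} to reset the data at each endpoint and interpolating $\ell^6\subset\ell^2\cap\ell^\infty$ to recover $L^6\ell^6$. Both arguments rest on the same three inputs (Propositions~\ref{P:suppression}, \ref{P:discrete Strichartz}, \ref{P:loc Strichartz} and mass conservation), and your interval bookkeeping is a legitimate substitute for the paper's H\"older-in-time losses. One caveat on the numerology: your computation yields the threshold $\gamma\le 3/4$ (from $1-2\gamma/3\ge\tfrac12$), not ``exactly'' $13/18$; the constraint in \eqref{N from h} is what the paper's particular exponent choices require, and your argument simply has room to spare. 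Note also that your $P$- and $Q$-estimates are coupled (each closes against the full $\|\alpha\|_{L^4_t\ell^\infty_n}$), so they must be run as a joint bootstrap on a common partition; this works but deserves a sentence.

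One step does fail as written: for the pair $(4,\infty)$ you invoke the dual pair $(4/3,1)$ and bound $\|F\|_{L^{4/3}_t\ell^1_n}$, but Proposition~\ref{P:loc Strichartz} requires control of $\|PF\|_{L^{\tilde q'}_t\ell^{\tilde r'}_n}$, and the sharp Fourier cutoff $P$ is \emph{not} bounded on $\ell^1_n$ (the paper records $\ell^p$-boundedness only for $1<p<\infty$; by duality with the stated unboundedness on $\ell^\infty$, the $\ell^1$ endpoint genuinely fails). The repair is to use the off-diagonal dual pair $(\tilde q',\tilde r')=(6/5,6/5)$ for both output pairs $(6,6)$ and $(4,\infty)$, estimating $\|PF\|_{\ell^{6/5}_n}\lesssim\|F\|_{\ell^{6/5}_n}\lesssim\|\alpha\|_{\ell^2_n}\|\alpha\|_{\ell^6_n}^2$ and then H\"older in time, which produces the same $|I|^{1/2}$ gain; this is precisely the choice made in the paper. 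With that substitution your proof goes through.
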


\begin{proof}
First note that by H\"older's inequality and Proposition~\ref{P:mass},
\begin{align}\label{2:16}
\nm{\ah}_{L^6_t\l^6_n([-h^{-2}T, h^{-2}T]\times\Z)} &+ \nm{\ah}_{L^4_t\l^\infty_n ([-h^{-2}T, h^{-2}T]\times\Z)} \notag\\
&\lesssim (h^{-2}T)^{\frac16} \|\alpha\|_{L_t^\infty \ell_n^2(\R\times\Z)} +(h^{-2}T)^{\frac14} \|\alpha\|_{L_t^\infty \ell_n^2(\R\times\Z)} \notag\\
&\lesssim \bigl[(h^{-2}T)^{\frac16}+(h^{-2}T)^{\frac14} \bigr]\|\alpha(0)\|_{\ell_n^2}. 
\end{align}
In order to eliminate the dependence on $h$ in the inequality above, we will run a bootstrap argument combined with Strichartz estimates.

Let $P=P_{\frac1{100}}$ and $\widetilde P= P_{\frac34}$ denote the sharp Fourier cutoffs introduced in \eqref{P defn}.  Writing $F(\alpha)$ for the nonlinearity in \eqref{AL} and exploiting that  $\widetilde P [F(P\alpha)]= F(P\alpha) $, we obtain the following Duhamel representation of the solution:
\begin{align*}
\ah(t)&=e^{\ui t\DD}\ah(0)-\ui\int_0^{t}e^{\ui (t-\tau)\DD}F\bigl(\alpha(\tau)\bigr)\,d\tau\\
&=e^{\ui t\DD}\widetilde P\ah(0) +e^{\ui t\DD}(1-\widetilde P)\ah(0) -\ui\int_0^{t} \widetilde P e^{\ui (t-\tau)\DD}F\bigl(P\alpha(\tau)\bigr)\,d\tau \\
&\qquad -\ui\int_0^{t}e^{\ui (t-\tau)\DD}\Bigl[F\bigl(\alpha(\tau)\bigr)-F\bigl(P\alpha(\tau)\bigr)\Bigr]\,d\tau.
\end{align*}

With a view to closing a bootstrap argument, we will estimate these terms on the time interval $[-h^{-2}T_0, h^{-2}T_0]$ with $T_0\leq T$, which may later be chosen small.  (Recall that $T$ was arbitrarily large.)  For two of the terms, the presence of $\widetilde P$ allows us to employ Proposition~\ref{P:loc Strichartz} and so treat both spacetime norms of interest simultaneously.  First,
\begin{align}\label{3:05}
    \bigl\| e^{\ui t\DD}\widetilde P\ah(0) \bigr\|_{L^6_t\l^6_n \cap L^4_t \ell^\infty_n}\ls\|\widetilde P\ah(0)\|_{\l_n^2} \lesssim \|\alpha(0)\|_{\ell_n^2}.
\end{align}
Secondly, using Proposition~\ref{P:mass}, we have
\begin{align} \label{3:07}
\nm{\int_0^{t}\widetilde P e^{\ui (t-\tau)\DD}F\bigl(P\alpha(\tau)\bigr)\,d\tau}_{L^6_t\l^6_n\cap L^4_t \ell^\infty_n}
&\ls \bnm{F(P\ah)}_{L^{\frac{6}{5}}_t\l^{\frac{6}{5}}_n}\no\\
&\ls \left(h^{-2}T_0\right)^{\frac{1}{2}}\nm{\ah}_{L^{\infty}_t\l_n^2}\nm{\ah}_{L^{6}_t\l_n^6}^2 \no\\
&\ls \left(h^{-2}T_0\right)^{\frac{1}{2}}\nm{\ah(0)}_{\l_n^2}\nm{\ah}_{L^{6}_t\l_n^6}^2.
\end{align}

Turning to the remaining terms in the Duhamel expansion, we first narrow our focus to just the $L^6_t\ell^6_n$ norm.  By Propositions~\ref{P:discrete Strichartz} and~\ref{P:suppression},
\begin{align}\label{3:06}
\nm{e^{\ui t\DD}(1-\widetilde P)\ah(0)}_{L^6_t\l^6_n}&\lesssim (h^{-2}T_0)^{\frac{1}{18}} \nm{e^{\ui t\DD}(1-\widetilde P)\ah(0)}_{L^9_t\l^6_n}\notag\\
& \lesssim (h^{-2}T_0)^{\frac{1}{18}} \|(1-\widetilde P)\ah(0)\|_{\l_n^2} \notag\\
&\lesssim (h^{-2}T_0)^{\frac{1}{18}}\bigl(Nh+h^{\frac12}\bigr) \|\ah(0)\|_{\l_n^2}.
\end{align}
Proceeding in a parallel fashion, we find
\begin{align}\label{3:08}
&\nm{\int_0^{t}e^{\ui (t-\tau)\DD}\Bigl[F\bigl(\alpha(\tau)\bigr)-F\bigl(P\alpha(\tau)\bigr)\Bigr]\,d\tau}_{L^6_t\l^6_n}\no\\
&\qquad\ls(h^{-2}T_0)^{\frac{1}{18}}\nm{\int_0^{t}e^{\ui (t-\tau)\DD}\Bigl[F\bigl(\alpha(\tau)\bigr)-F\bigl(P\alpha(\tau)\bigr)\Bigr]\,d\tau}_{L^9_t\l^6_n}\no\\
&\qquad \ls (h^{-2}T_0)^{\frac{1}{18}}\bnm{F(\ah)-F(P\ah)}_{L^{\frac{9}{8}}_t\l^{\frac{6}{5}}_n}\no\\
&\qquad \ls(h^{-2}T_0)^{\frac{11}{18}}\bnm{(I-P)\ah}_{L^{\infty}_t\l^{2}_n}\nm{\ah}_{L^{6}_t\l_n^6}^2\no\\
&\qquad \ls(h^{-2}T_0)^{\frac{11}{18}} \bigl(Nh+h^{\frac12}\bigr) \|\ah(0)\|_{\l_n^2} \nm{\ah}_{L^{6}_t\l_n^6}^2 \no\\
&\qquad \ls(h^{-2}T_0)^{\frac{11}{18}} \bigl(Nh+h^{\frac12}\bigr) h^\frac12 \bigl[\|\psi_0\|_{L^2} + \|\phi_0\|_{L^2} \bigr] \nm{\ah}_{L^{6}_t\l_n^6}^2 .
\end{align}
The last step here was an application of \eqref{E:mass bound}.

Combining \eqref{3:05} through \eqref{3:08}, recalling that $N\leq h^{-\frac79}$, and using Proposition~\ref{P:mass}, we deduce that
\begin{align*}
&\nm{\ah}_{L^6_t\l^6_n([-h^{-2}T_0, h^{-2}T_0]\times\Z)}\\
&\qquad \qquad\ls \big(1+T_0^{\frac{1}{18}}\big)\nm{\ah(0)}_{\l^2_n}+ h^{-\frac{1}{2}}\big(T_0^{\frac{1}{2}}+T_0^{\frac{11}{18}}\big)\nm{\ah}_{L^{6}_t\l_n^6([-h^{-2}T_0, h^{-2}T_0]\times\Z)}^2,
\end{align*}
where the implicit constant depends only on $\psi_0$ and $\phi_0$.   Taking $T_0$ sufficiently small, a bootstrap argument yields
\begin{align*}
&\nm{\ah}_{L^6_t\l^6_n([-h^{-2}T_0, h^{-2}T_0]\times\Z)}\ls \nm{\ah(0)}_{\l^2_n}.
\end{align*}
Note that \eqref{2:16} guarantees that the quantity being bootstrapped is initially finite.

Iterating this argument $(1+ \frac T{T_0})$ many times, we conclude that
\begin{align}\label{T6}
\nm{\ah}_{L^6_t\l^6_n([-h^{-2}T, h^{-2}T]\times\Z)}\ls_T \nm{\ah(0)}_{\l^2_n}.
\end{align}
In particular, in view of Lemma~\ref{Lem:norm-relation} and Proposition~\ref{P:mass}, this yields
\begin{align*}
    \nm{\ph}_{L^6_{t,x}([-T,T]\times\R)} +\nm{\phi^h}_{L^6_{t,x}([-T,T]\times\R)}\ls h^{-\frac{1}{2}}\nm{\alpha}_{L^6_t\l^6_n([-h^{-2}T, h^{-2}T]\times\Z)}\ls_T 1.
\end{align*}

We turn now to the $L^4_tL^{\infty}_x$ norm, which we will treat using \eqref{T6} and so not need to argue via bootstrap.  Henceforth, all norms will be taken over the full time interval $[-h^{-2}T, h^{-2}T]$.

Mimicking \eqref{3:06}, we find
\begin{align}\label{4:06}
\nm{e^{\ui t\DD}(1-\widetilde P)\ah(0)}_{L^4_t\l^{\infty}_n}&\lesssim (h^{-2}T)^{\frac{1}{12}} \nm{e^{\ui t\DD}(1-\widetilde P)\ah(0)}_{L^6_t\l^\infty_n}\notag\\
& \lesssim (h^{-2}T)^{\frac{1}{12}} \|(1-\widetilde P)\ah(0)\|_{\l_n^2} \notag\\
&\lesssim (h^{-2}T)^{\frac{1}{12}}\bigl(Nh+h^{\frac12}\bigr) \|\ah(0)\|_{\l_n^2}.
\end{align}
Likewise, paralleling \eqref{3:08}, we find
\begin{align}\label{4:08}
&\nm{\int_0^{t}e^{\ui (t-\tau)\DD}\Bigl[F\bigl(\alpha(\tau)\bigr)-F\bigl(P\alpha(\tau)\bigr)\Bigr]\,d\tau}_{L^4_t\l^{\infty}_n}\no\\
&\qquad\ls(h^{-2}T_0)^{\frac{1}{12}}\nm{\int_0^{t}e^{\ui (t-\tau)\DD}\Bigl[F\bigl(\alpha(\tau)\bigr)-F\bigl(P\alpha(\tau)\bigr)\Bigr]\,d\tau}_{L^6_t\l^\infty_n}\no\\
&\qquad \ls (h^{-2}T_0)^{\frac{1}{12}}\bnm{F(\ah)-F(P\ah)}_{L^{\frac98}_t\l^{\frac65}_n}\no\\
&\qquad \ls(h^{-2}T_0)^{\frac{23}{36}} \bigl(Nh+h^{\frac12}\bigr) h^\frac12 \bigl[\|\psi_0\|_{L^2} + \|\phi_0\|_{L^2} \bigr] \nm{\ah}_{L^{6}_t\l_n^6}^2 .
\end{align}
Combining \eqref{3:05}, \eqref{3:07},  \eqref{4:06}, \eqref{4:08}, \eqref{T6}, and the fact that $N\lesssim h^{-\frac{13}{18}}$ yields
\begin{align*}
\nm{\alpha}_{L^4_t\l^\infty_n([-h^{-2}T, h^{-2}T]\times\Z)}\ls_T  \|\ah(0)\|_{\l_n^2}
\end{align*}
and so completes the proof of the proposition.
\end{proof}


\section{Precompactness in $C_tL^2_x$}\label{sec:precompactness}

This section is dedicated to the proof of the following precompactness result:

\begin{theorem} \label{T:precompactness}
Let $\alpha$ be the solution to \eqref{AL} with initial data \eqref{E:initial data} and $h$ satisfying \eqref{small h} and let $\psi^h$ and $\phi^h$ be as in \eqref{psi phi def}. For $T>0$ fixed, the two families of functions $\Psi=\big\{\psi^h(t,x):0<h\leq h_0 \big\}$ and $\Phi=\big\{\phi^h(t,x):0<h\leq h_0 \big\}$ are precompact in $C([-T,T];L^2_x(\r))$.
\end{theorem}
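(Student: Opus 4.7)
The plan is to verify the Arzel\`a--Ascoli criterion in $C([-T,T];L^2_x(\R))$, where the $L^2_x$-precompactness at each $t$ reduces via Riesz--Fr\'echet--Kolmogorov to three uniform-in-$h$ properties of $\psi^h$: pointwise $L^2_x$-boundedness, spatial equicontinuity (equivalently, Fourier tightness of $\widehat{\psi^h}$), and spatial tightness. The family $\phi^h$ is treated in parallel via the $\theta\mapsto\theta-\pi$ shift built into \eqref{psi phi def}. Boundedness is immediate: combining Lemma~\ref{L:C} with \eqref{E:mass bound} gives $\|\psi^h(t)\|_{L^2_x}^2\lesssim h^{-1}\|\ah(h^{-2}t)\|_{\l^2_n}^2\lesssim \|\psi_0\|_{L^2}^2+\|\phi_0\|_{L^2}^2$ uniformly in $t$ and $h$.

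For Fourier tightness I would apply the conservation law $\mathbf G$ of Proposition~\ref{P:F is} at parameter $\kappa = R\geq \kappa_0$. Because the integrand in \eqref{F[2]} exceeds $\tfrac12$ wherever $\sin^2\theta\geq\sinh^2(2Rh)$,
\[
\int_{2Rh<|\theta|<\pi/2}\!|\widehat\ah(t,\theta)|^2\,\tfrac{d\theta}{2\pi}\leq 2\mathbf G^{[2]}(Rh;\ah(t)).
\]
Conservation of $\mathbf G$ together with \eqref{E:F is F2} reduces the right-hand side to its $t=0$ value plus an $O(h/R)$ remainder. Changing variables $\theta=h\xi$ and using \eqref{data supp}, the initial value becomes $\int[\xi^2/(4R^2+\xi^2)](|\widehat\psi_0|^2+|\widehat\phi_0|^2)\tfrac{d\xi}{2\pi}+o_{h\to 0}(1)$, which tends to $0$ as $R\to\infty$ by dominated convergence. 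Transporting through \eqref{psi phi def} yields uniform-in-$(h,t)$ tightness of $\widehat{\psi^h}$.

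Spatial tightness is where I expect the main obstacle. The natural microscopic mass current for \eqref{AL} couples adjacent sites and does not commute cleanly with sharp Fourier cutoffs, so I would bypass it: the Fourier-tightness step lets me replace $\ah(t)$ by its truncation to $|\theta|\leq hR$ modulo an $\l^2$-small remainder, uniformly in $h$ and $t\in[-T,T]$. For the truncated piece the Duhamel formula $\ah(h^{-2}t)=e^{\ui h^{-2}t\DD}\ah(0)-\ui\int_0^{h^{-2}t}e^{\ui(h^{-2}t-\tau)\DD}F(\ah(\tau))\,d\tau$ is the right tool: on $|\theta|\leq hR$ the discrete group velocity $|2\sin\theta|$ is at most $2hR$, so over the physical time $h^{-2}T$ the linear propagator displaces mass by at most $2RT$ units in $x$-space; combined with Lemma~\ref{L:tighty} and an $L^2$-approximation of $\psi_0,\phi_0$ by compactly supported functions, this yields tightness for the linear part. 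The nonlinear Duhamel term is estimated by restricting to $|x|$ large, applying H\"older in space there, and invoking the dual discrete Strichartz estimate of Proposition~\ref{P:discrete Strichartz} together with the uniform $L^6_t\l^6_n$ bound of Proposition~\ref{Prop:strichartz-bounds} and Lemma~\ref{Lem:norm-relation} to move between $\l^p$ and $L^p$.

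Finally, equicontinuity in time follows by writing $\psi^h(t_2)-\psi^h(t_1)=\mathcal R[\ah(h^{-2}t_2)-\ah(h^{-2}t_1)]$ and splitting at Fourier scale $|\theta|\leq hR$. The high-frequency tail is absorbed by the Fourier-tightness above. For the low-frequency portion, since $|\widehat{\DD\ah}(\theta)|\leq h^2R^2|\widehat\ah(\theta)|$ on $|\theta|\leq hR$, the linear increment contributes $O(R^2|t_2-t_1|)$ to $\|\psi^h(t_2)-\psi^h(t_1)\|_{L^2_x}$, while the nonlinear increment acquires a $|t_2-t_1|^{1/6}$ modulus from H\"older in time against the global $L^6_{t,x}$ bound of Proposition~\ref{Prop:strichartz-bounds}. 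Choosing $R$ large before taking $|t_2-t_1|$ small closes the argument, and Arzel\`a--Ascoli concludes.
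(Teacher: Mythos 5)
Your uniform-boundedness and Fourier-tightness (spatial equicontinuity) steps coincide with the paper's: both rest on Lemma~\ref{Lem:norm-relation}, Proposition~\ref{P:mass}, and the conserved quantity $\mathbf G$ of Proposition~\ref{P:F is}. Your time-equicontinuity argument is also essentially the paper's, with one technical caveat: the exponent pair $(6,6)$ is \emph{not} admissible for the unlocalized discrete Strichartz estimate of Proposition~\ref{P:discrete Strichartz} (since $\tfrac16+\tfrac1{18}>\tfrac16$), so you cannot hit the full nonlinearity with a dual $L^{6/5}_t\ell^{6/5}_n$ bound. The paper splits $F(\alpha)=F(P\alpha)+[F(\alpha)-F(P\alpha)]$, applies the frequency-localized estimate of Proposition~\ref{P:loc Strichartz} to the first piece, and uses the Airy-type pair $L^{9/8}_t\ell^{6/5}_n$ together with the frequency suppression of Proposition~\ref{P:suppression} on the second. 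This is a fixable omission.

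The genuine gap is in spatial tightness --- exactly the step the paper singles out as delicate. Your linear part (group velocity at most $2hR$ on $|\theta|\le hR$) can be made rigorous with a kernel or propagation estimate in the spirit of Lemma~\ref{L:tighty}, but the proposed treatment of the nonlinear Duhamel term does not work. Over the full window the dual Strichartz estimate gives
$\bigl\|\int_0^{h^{-2}t}e^{\ui(h^{-2}t-\tau)\DD}F(\alpha(\tau))\,d\tau\bigr\|_{\ell^2_n}\lesssim (h^{-2}T)^{1/2}\|\alpha\|_{L^\infty_t\ell^2_n}\|\alpha\|_{L^6_t\ell^6_n}^2\lesssim T^{1/2}h^{1/2}$,
which is of the same order as the total mass $\|\alpha\|_{\ell^2_n}\simeq h^{1/2}$; a global Strichartz bound therefore cannot certify that the nonlinear contribution to $\{|nh|>R\}$ is $o(h^{1/2})$. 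Nor does H\"older in space over the unbounded region $\{|x|>R\}$ produce smallness, since the indicator of that region has infinite $\ell^p_n$ norm for every finite $p$; and you cannot restrict the input $F(\alpha(\tau))$ to large $|x|$ because the propagator spreads mass. What is required is a genuinely localized monotonicity argument: the paper differentiates $\|\varphi_R P\alpha(t)\|_{\ell^2_n}^2$ in time, sums by parts in the $\DD$ term, controls the commutator $[P,\varphi_R]$ via Schur's test, and closes with Gronwall using the $L^4_t\ell^\infty_n$ bound from Proposition~\ref{Prop:strichartz-bounds}. You would need to supply an argument of this type to complete the tightness step; the rest of your outline then goes through.
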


By a generalization of the Arzel\`a--Ascoli theorem due to M. Riesz \cite{Riesz1933}, precompactness of the two families will follow once we establish uniform boundedness, equicontinuity, and tightness properties. Specifically, we will demonstrate:\\[2mm]
\noindent\emph{Uniform boundedness}: there exists $C>0$ such that
\begin{align}\label{unif bdd}
\sup_{0<h\leq h_0}\Bigl[\|\psi^h(t)\|_{L^\infty_t L^2_x([-T,T]\times\R)}+\|\phi^h(t)\|_{L^\infty_t L^2_x([-T,T]\times\R)}\Bigr]\leq C.
\end{align}
\noindent\emph{Equicontinuity}: for any $\ep>0$, there exists $\delta>0$ so that whenever $\abs{s}+\abs{y}<\delta$,
\begin{align} \label{equi}
\|\psi^h(t+s,x+y)-\psi^h(t,x)\|_{L^2_x}+\|\phi^h(t+s,x+y)-\phi^h(t,x)\|_{L^2_x}< \ep
\end{align}
uniformly for $t\in[-T,T]$ (with $t+s\in [-T,T]$) and for $0<h\leq h_0$.\\[2mm]
\noindent\emph{Tightness}: for any $\ep>0$, there exists $R>0$ such that
\begin{align} \label{tightness}
\sup_{0<h\leq h_0}  \sup_{|t|\leq T}\int_{\abs{x}\geq R}\babs{\ph(t,x)}^2+\babs{\phi^h(t,x)}^2\, dx<\ep.
\end{align}

As $h\mapsto(\ph,\phi^h)$ defines a continuous mapping from $(0,h_0]$ to $C([-T,T];L^2_x(\r))$, these three conditions automatically hold on any interval of the form $[h_1,h_0]$.  Correspondingly, it suffices to prove \eqref{equi} and \eqref{tightness} only for $0<h\leq h_1$ where $h_1$ may depend on $\eps$. 

Combining Lemma~\ref{Lem:norm-relation} and Proposition~\ref{P:mass}, we find
\begin{align*}
\|\psi^h(t)\|_{L^\infty_t L^2_x([-T,T]\times\R)}&+\|\phi^h(t)\|_{L^\infty_t L^2_x([-T,T]\times\R)} \\
&\lesssim h^{-\frac12} \|\alpha\|_{L_t^\infty \ell_n^2([-h^{-2}T, h^{-2}T]\times\Z)} \lesssim 1,
\end{align*}
uniformly for $0<h\leq h_0$, which settles \eqref{unif bdd}.

We turn now to the equicontinuity property, starting with equicontinuity in the spatial variable.  By the uniform boundedness property \eqref{unif bdd} and Plancherel, this is equivalent to tightness on the Fourier side, that is, for any $\ep>0$ there exist $\kappa>0$ such that
\begin{align}\label{equi 2}
\sup_{|t|\leq T}\|P_{|\xi|\geq \kappa} \psi^h(t)\|_{L_x^2} +\|P_{|\xi|\geq \kappa} \phi^h(t)\|_{L_x^2}<\ep
\end{align}
uniformly for $0<h\leq h_0$.

A straightforward computation using \eqref{psi phi def} shows that for $\kappa h<\frac\pi2$ we have
\begin{align*}
\|P_{|\xi|\geq \kappa} \psi^h(t)\|_{L_x^2}^2 +\|P_{|\xi|\geq \kappa} \phi^h(t)\|_{L_x^2}^2
&= \int_{\kappa h\leq |h\xi| \leq \pi - \kappa h} \bigl| \widehat{\alpha}(h^{-2}t, h\xi) \bigr|^2\, \tfrac{d\xi}{2\pi}\\
&\leq \tfrac Ch \int_{\kappa h\leq |\theta| \leq \pi - \kappa h}\tfrac{\sin^2(\theta)\, |\widehat{\alpha}(h^{-2}t,\theta)|^2}{\sinh^2(2\kappa h) + \sin^2(\theta)}\,\tfrac{d\theta}{2\pi}\\
&\leq \tfrac Ch \mathbf G^{[2]}\bigl(\kappa h;\alpha(h^{-2}t)\bigr)
\end{align*}
for a universal constant $C>0$ and $ \mathbf G^{[2]}$ as defined in \eqref{F[2]}.  Using Proposition~\ref{P:F is} followed by \eqref{data supp} and Proposition~\ref{P:mass},  we may thus estimate
\begin{align*}
\sup_{|t|\leq T} &\Bigl[\|P_{|\xi|\geq \kappa} \psi^h(t)\|_{L_x^2}^2 +\|P_{|\xi|\geq \kappa} \phi^h(t)\|_{L_x^2}^2\Bigr]\\
&\quad\lesssim h^{-1}  \bigl[\mathbf G^{[2]}\bigl(\kappa h, \alpha(0)\bigr) +  \sinh^{-1}(\kappa h)\|\alpha(0)\|_{\ell^2_n}^4\bigr]\\
&\quad \lesssim \int_{-N}^N \tfrac{\sin^2(h\xi)}{\sinh^2(2\kappa h) + \sin^2(h\xi)}\,\bigl[ |\widehat{\psi_0}(\xi)|^2+  |\widehat{\phi_0}(\xi)|^2\bigr]\,d\xi+ \kappa^{-1}\\
&\quad\lesssim  \|P_{|\xi|\geq \sqrt\kappa}\,\psi_0\|_{L_x^2}^2 + \|P_{|\xi|\geq \sqrt\kappa}\, \phi_0\|_{L_x^2}^2
+ \kappa^{-1} \bigl[ \|\psi_0\|_{L_x^2}^2+ \|\phi_0\|_{L_x^2}^2\bigr] + \kappa^{-1}.
\end{align*}
Choosing $\kappa=\kappa(\eps)$ sufficiently large we can guarantee that \eqref{equi 2} holds whenever $0<h<h_1$ for $h_1=h_1(\eps)$.  Note that the restriction $0<h<h_1$ ensures that $\kappa h<\frac\pi2$, which allowed for the computations above.  Recall that spatial equicontinuity in the regime $h\in[h_1,h_0]$ is a consequence of the compactness of the interval $[h_1,h_0]$ and the continuity of the mapping $h\mapsto(\ph,\phi^h)$.

We now turn to the second half of \eqref{equi}, namely, equicontinuity in the time variable.  By \eqref{psi phi def} and Plancherel,
\begin{align*}
\bigl\| \psi^h(t+s)-\psi^h(t)\bigr\|_{L_x^2}^2&=\tfrac1h\bigl\| P_{|\theta|<\frac\pi2}\bigl[\alpha \bigl(h^{-2}(t+s)\bigr)-\alpha \bigl(h^{-2}(t)\bigr)\bigr]\bigr\|_{\ell_n^2}^2,\\
\bigl\| \phi^h(t+s)-\phi^h(t)\bigr\|_{L_x^2}^2 &= \tfrac1h \bigl\| P_{|\theta-\pi|<\frac\pi2}\bigl[e^{4\ui h^{-2}s}\alpha \bigl(h^{-2}(t+s)\bigr)- \alpha \bigl(h^{-2}(t)\bigr)\bigr]\bigr\|_{\ell_n^2}^2.
\end{align*}
To estimate the right-hand sides above, we will rely on Duhamel's formula
\begin{align*}
    \ah\bigl(h^{-2}(t+s)\bigr)=e^{\ui h^{-2}s\DD}\ah\bigl(h^{-2}t\bigr)-\ui\int_{h^{-2}t}^{h^{-2}(t+s)} e^{\ui [h^{-2}(t+s)-\tau]\DD}F\bigl(\alpha(\tau)\bigr)\,d\tau.
\end{align*}

Using Plancherel and evaluating the contributions of the regions $|\theta|< \kappa h$ and $\kappa h<|\theta|<\frac\pi2$ separately, we find
\begin{align*}
\bigl\| P_{|\theta|<\frac\pi2}\bigl[\alpha \bigl(h^{-2}(t+s)\bigr)&-\alpha \bigl(h^{-2}(t)\bigr)\bigr]\bigr\|_{\ell_n^2}^2\\
&=\int_{|\theta|<\frac\pi2} \bigl| e^{-4ih^{-2}s\sin^2(\frac \theta2)}-1\bigr|^2 \bigl| \widehat{\alpha}\bigl(h^{-2}t, \theta\bigr)\bigr|^2\,\tfrac{d\theta}{2\pi} \\
&\lesssim \kappa^4|s|\bigl\|\ah\bigl(h^{-2}t\bigr)\bigr\|_{\ell_n^2}^2+ h \bigl \|P_{|\xi|\geq \kappa} \psi^h(t)\bigr\|_{L_x^2}^2.
\end{align*}
In view of Proposition~\ref{P:mass} and \eqref{equi 2}, we may choose $\kappa=\kappa(\ep)$ sufficiently large, followed by $\delta=\delta(\ep)$ sufficiently small to guarantee that
\begin{align}\label{5:22}
\bigl\| P_{|\theta|<\frac\pi2}\bigl[\alpha& \bigl(h^{-2}(t+s)\bigr)-\alpha \bigl(h^{-2}(t)\bigr)\bigr]\bigr\|_{\ell_n^2}^2\leq \tfrac{\ep^2 h}{10} \quad\text{for all $|s|\leq \delta$}.
\end{align}
Arguing similarly, we find
\begin{align}\label{5:23}
\bigl\| P_{|\theta-\pi|<\frac\pi2}\bigl[e^{4\ui h^{-2}s}&\alpha \bigl(h^{-2}(t+s)\bigr)- \alpha \bigl(h^{-2}(t)\bigr)\bigr]\bigr\|_{\ell_n^2}^2\notag\\
&=\int_{|\theta|<\frac\pi2} \bigl| e^{4ih^{-2}s\sin^2(\frac {\theta+\pi}2)}-1\bigr|^2 \bigl| \widehat{\alpha}\bigl(h^{-2}t, \theta+\pi\bigr)\bigr|^2\,\tfrac{d\theta}{2\pi} \notag\\
&\lesssim \kappa^4|s|\bigl\|\ah\bigl(h^{-2}t\bigr)\bigr\|_{\ell_n^2}^2+ h \bigl \|P_{|\xi|\geq \kappa} \phi^h(t)\bigr\|_{L_x^2}^2\leq \tfrac{\ep^2 h}{10}
\end{align}
for $\kappa=\kappa(\ep)$ sufficiently large and $|s|\leq \delta=\delta(\eps)$.

It remains to estimate the contribution of the nonlinearity.  To this end, let $P=P_{\frac1{100}}$ and $\widetilde P= P_{\frac34}$ denote the sharp Fourier cutoffs introduced in \eqref{P defn}.  We will again use that $\widetilde P [F(P\alpha)]= F(P\alpha) $ to estimate the contribution of $F(P\alpha)$ using the frequency-localized Strichartz estimates of Proposition~\ref{P:loc Strichartz}, followed by Propositions~\ref{P:mass} and \ref{Prop:strichartz-bounds}:
\begin{align}\label{5:24}
\Bigl\| \int_{h^{-2}t}^{h^{-2}(t+s)} & e^{\ui [h^{-2}(t+s)-\tau]\DD}F\bigl(P\alpha(\tau)\bigr)\,d\tau\Bigr\|_{\ell_n^2}^2\notag\\
&\lesssim \bigl\| F(P\alpha)\bigr\|_{L_t^{\frac{6}{5}}\l_n^{\frac{6}{5}}([-h^{-2}t, h^{-2}(t+s)]\times\Z)}^2\notag\\
&\lesssim h^{-2}|s|\nm{\ah}_{L^6_t\l_n^6([-h^{-2}T, h^{-2}T]\times\Z)}^4\nm{\ah}_{L^{\infty}_t\l_n^2([-h^{-2}T, h^{-2}T]\times\Z)}^2
\lesssim_T  h|s| \leq \tfrac{\ep^2 h}{10},
\end{align}
provided $\delta=\delta(\ep,T)$ is chosen sufficiently small.

Finally we estimate the contribution of $F(\alpha)-F(P\alpha) $ using the discrete Strichartz estimates from Proposition~\ref{P:discrete Strichartz}, followed by Propositions~\ref{Prop:strichartz-bounds} and \ref{P:suppression}:
\begin{align}\label{5:25}
\Bigl\| \int_{h^{-2}t}^{h^{-2}(t+s)} &e^{\ui [h^{-2}(t+s)-\tau]\DD}\bigl(F(\ah)-F(P\ah)\bigr) \, d\tau\Bigr\|_{\ell_n^2}^2\notag\\
&\ls \bnm{F(\ah)-F(P\ah)}_{L_t^{\frac{9}{8}}\l_n^{\frac{6}{5}}([-h^{-2}t, h^{-2}(t+s)]\times\Z)}^2\notag\\
&\ls \bigl(h^{-2}|s|\bigr)^{\frac{10}{9}}\nm{\ah}_{L^6_t\l_n^6([-h^{-2}T, h^{-2}T]\times\Z)}^4\bnm{(I-P)\ah}_{L^{\infty}_t\l_n^2([-h^{-2}T, h^{-2}T]\times\Z)}^2\notag\\
&\ls_T \bigl(h^{-2}|s|\bigr)^{\frac{10}{9}} \bigl(Nh+h^{\frac12}\bigr)^2\nm{\ah_0}_{\l_n^2}^6 \leq \tfrac{\ep^2 h}{10},
\end{align}
since $N \leq h^{-\frac89}$ and $\delta=\delta(\ep,T)$ is chosen sufficiently small.

Collecting \eqref{5:22} through \eqref{5:25}, we conclude that for all $|s|\leq \delta=\delta(\ep,T)$,
\begin{align}\label{equi 3}
\|\psi^h(t+s)-\psi^h(t)\|_{L^2_x}+\|\phi^h(t+s)-\phi^h(t)\|_{L^2_x}< \ep,
\end{align}
uniformly for $t\in[-T,T]$ (with $t+s\in [-T,T]$) and for $0<h\leq h_0$.  This expresses equicontinuity in the time variable and combined with \eqref{equi 2} settles \eqref{equi}.

Lastly, we will demonstrate the tightness property \eqref{tightness}.  To this end, let $\chi(x)$ be a smooth cutoff function satisfying
\begin{align*}
    \chi(x)=\begin{cases} 1&: \abs{x}\leq 1\\
    0 &: \abs{x}\geq 2\end{cases}
\end{align*}
from which we build a cutoff function to large $n$ on the lattice via $\varphi_R(n):=1-\chi(nh/R)$ for some $R\geq 1$ to be chosen later.

We will also be localizing in frequency: We define $P:\ell^2_n\to\ell^2_n$ via
\begin{align}\label{P def}
\widehat {P \alpha} (\theta) = \bigl[\chi\bigl(\tfrac\theta{\kappa h}\bigr) + \chi\bigl(\tfrac{\theta-\pi}{\kappa h}\bigr)\bigr] \widehat\alpha(\theta)
	\qtq{where $-\tfrac\pi2 \leq \theta < \tfrac{3\pi}2$ and $\kappa h <\tfrac\pi4$.}
\end{align}
By Schur's test, we obtain the commutator bound
\begin{align}\label{12:26}
\bigl\| [P,\varphi_R] \bigr\|_{\ell^2_n\to\ell^2_n} \lesssim \tfrac{1}{\kappa R}.
\end{align}

We will prove tightness of the orbit of the solution to \eqref{AL}.  Specifically, we will show that for any $\ep>0$ there exists $R\geq 1$ such that 
\begin{align} \label{tightness 2}
\sup_{|t|\leq h^{-2}T}\bigl\|\varphi_R\ah(t)\bigr\|_{\ell^2_n}^2<\ep h 
\end{align}
uniformly for $0<h\leq h_0$. The tightness of $\alpha_n(t)$ transfers to $\psi^h$ and $\phi^h$ as a consequence of Lemmas~\ref{L:C} and \ref{L:tighty}.

By the equicontinuity in the space variable \eqref{equi 2}, we have  
\begin{align}\label{12:27}
\bigl\|(1-P)\ah(t)\bigr\|_{\ell^2_n}^2< \ep^4 h \quad\text{uniformly for $|t|\leq h^{-2}T$ and $0<h\leq h_0$,}
\end{align}
provided $\kappa\geq 1$ is chosen sufficiently large depending only on $\eps$.  To maintain the condition $\kappa h <\tfrac\pi4$ from \eqref{P def}, we will restrict attention to  $0<h<h_1=h_1(\ep)$~and prove \eqref{tightness 2} in this regime.  Recall that tightness in the regime $h\in[h_1,h_0]$ is an immediate consequence of the compactness of the interval $[h_1,h_0]$ and the continuity of the mapping $h\mapsto \alpha$ with $\alpha$ being the solution to \eqref{AL} with initial data as in~\eqref{E:initial data}.

In view of \eqref{12:27} and the preceding discussion, \eqref{tightness 2} will thus follow from the statement that for any $\ep>0$ there exists $R\geq 1$ such that 
\begin{align} \label{tightness 3}
\sup_{|t|\leq h^{-2}T}\bigl\|\varphi_RP\ah(t)\bigr\|_{\ell^2_n}^2<\ep h \quad\text{uniformly for $0<h<h_1$.}
\end{align}

From \eqref{AL} we have
\begin{align}\label{12:40}
\partial_t \bigl\|\varphi_R P\ah(t)\bigr\|_{\ell^2_n}^2 = -2\Im \sum_n \varphi_R^2(n) \overline{P\alpha_n(t)} \cdot P\bigl\{ (\DD\ah)_n(t)- F_n[\ah(t)] \bigr\}.
\end{align}
To control the contribution of the quadratic term above we rewrite
\begin{align*}
 2&\Im\sum_n \varphi_R^2(n) P\beta_n(t) \cdot (\DD P\ah)_n(t)\\
&= 2\Im\sum_n \varphi_R^2(n) P\beta_n(t)\cdot  P\alpha_{n+1}(t) - 2\Im\sum_n \varphi_R^2(n) P\alpha_n(t)\cdot  P\beta_{n-1}(t)\\
&= 2\Im\sum_n \bigl[\varphi_R^2(n)-\varphi_R^2(n+1)\bigr]  P\beta_n(t)\cdot  P\alpha_{n+1}(t)\\
&= \Im\sum_n \bigl[\varphi_R^2(n)-\varphi_R^2(n+1)\bigr] P\beta_n(t) \cdot P\bigl[\alpha_{n+1}(t)-\alpha_{n-1}(t)\bigr]\\
&\quad +\Im\sum_n\bigl[2\varphi_R^2(n)-\varphi_R^2(n+1)-\varphi_R^2(n-1)\bigr] P\beta_n(t) \cdot P\alpha_{n-1}(t).
\end{align*}
By Plancherel, \eqref{P def}, and Proposition~\ref{P:mass}, 
\begin{align*}
\bigl\|P\bigl[\alpha_{n+1}(t)-\alpha_{n-1}(t)\bigr] \bigr\|_{\l_n^2}
&\lesssim \bigl\|\bigl[\chi\bigl(\tfrac\theta{\kappa h}\bigr)+ \chi\bigl(\tfrac{\theta-\pi}{\kappa h}\bigr) \bigr](e^{\ui\theta}-e^{-\ui\theta})\widehat\alpha(t,\theta) \bigr\|_{L_\theta^2}\\
&\ls \kappa h\nm{\ah(t)}_{\l_n^2} \ls \kappa h\nm{\ah(0)}_{\l_n^2}.
\end{align*}
Thus, using Proposition~\ref{P:mass} we may estimate
\begin{align}\label{12:41}
\Bigl|2\Im\sum_n & \varphi_R^2(n) P\beta_n(t) \cdot (\DD P\ah)_n(t) \Bigr|\notag\\
&\lesssim \kappa h\bigl\|\varphi_R^2(n)-\varphi_R^2(n+1)\bigr\|_{\ell_n^\infty}\|\ah(0)\|_{\l_n^2}^2\notag\\
&\quad+ \bigl\|2\varphi_R^2(n)-\varphi_R^2(n+1)-\varphi_R^2(n-1)\bigr\|_{\ell_n^\infty}\|\ah(0)\|_{\l_n^2}^2\notag\\
&\lesssim \bigl(\tfrac{\kappa h^2}{R} + \tfrac{h^2}{R^2} \bigr)\|\ah(0)\|_{\l_n^2}^2.
\end{align}

We turn now to the contribution of the nonlinearity and decompose $\alpha = P\alpha + (1-P)\alpha$.  The contribution of the nonlinearity containing $(1-P)\alpha$ can be estimated using \eqref{12:27}, as follows:
\begin{align}\label{12:42}
\Bigl| \Im \sum_n \varphi_R^2(n) P\beta_n(t) \cdot P\Bigl\{&(1-P)\ah_n(t)\cdot \beta_n(t)\bigl[\ah_{n+1}(t) + \ah_{n-1}(t)\bigr] \Bigr\}  \Bigr|\notag\\
&\lesssim \|(1-P)\ah(t)\|_{\ell^2_n} \|\ah(t)\|_{\ell^6_n}^3\lesssim \eps^2 h^{\frac12}  \|\ah(t)\|_{\ell^6_n}^3.
\end{align}
Using \eqref{12:26} and Proposition~\ref{P:mass}, we bound the contribution of the remaining term via
\begin{align}\label{12:43}
\Bigl| \Im \sum_n \varphi_R^2(n)& P\beta_n(t)\cdot P\Bigl\{P\ah_n(t)\cdot \beta_n(t)\bigl[\ah_{n+1}(t) + \ah_{n-1}(t)\bigr] \Bigr\}  \Bigr|\notag\\
&\lesssim \bigl\|\varphi_R P\alpha(t)\bigr\|_{\ell^2_n}^2\|\alpha(t)\|_{\ell^\infty_n}^2 +  \bigl\|\varphi_R P\alpha(t)\bigr\|_{\ell^2_n} \bigl\| [P,\varphi_R] \bigr\|_{\ell^2_n\to\ell^2_n}\|\ah(t)\|_{\ell^6_n}^3\notag\\
&\lesssim \bigl\|\varphi_R P\alpha(t)\bigr\|_{\ell^2_n}^2\|\alpha(t)\|_{\ell^\infty_n}^2 + \tfrac{h^{\frac12} }{\kappa R}\|\ah(t)\|_{\ell^6_n}^3.
\end{align}

Combining \eqref{12:40} through \eqref{12:43}, we find that for $\kappa h <\tfrac\pi4$ with $\kappa\geq 1$ large depending only on $\eps$,
\begin{align*}
\partial_t \bigl\|\varphi_R P\ah(t)\bigr\|_{\ell^2_n}^2&\lesssim \tfrac{\kappa h^2}{R}\|\ah(0)\|_{\l_n^2}^2 + \bigl(\eps^2  +\tfrac{1}{\kappa R}\bigr) h^{\frac12}\|\ah(t)\|_{\ell^6_n}^3
+ \bigl\|\varphi_R P\alpha(t)\bigr\|_{\ell^2_n}^2\|\alpha(t)\|_{\ell^\infty_n}^2.
\end{align*}
By Gronwall and Propositions~\ref{Prop:strichartz-bounds} and \ref{P:mass}, this yields
\begin{align*}
\sup_{|t|\leq h^{-2}T}\bigl\| &\varphi_R  P\ah(t)\bigr\|_{\ell^2_n}^2\\
&\lesssim \Bigl[ \bigl\|\varphi_R P\ah(0)\bigr\|_{\ell^2_n}^2 + \tfrac{\kappa h^2}{R}\|\ah(0)\|_{\l_n^2}^2 h^{-2}T + \bigl(\eps^2  +\tfrac{1}{\kappa R}\bigr) h^{\frac12}\|\ah\|_{L_t^6\ell^6_n}^3(h^{-2}T)^{\frac12}\Bigr]\\
&\qquad \qquad \times\exp\Bigl\{ C\|\alpha\|_{L^4_t\ell^\infty_n}^2(h^{-2}T)^{\frac12} \Bigr\}\\
&\lesssim_T \bigl\|\varphi_R P\ah(0)\bigr\|_{\ell^2_n}^2 +\tfrac{\kappa T}{R} h + \bigl(\eps^2  +\tfrac{1}{\kappa R}\bigr)T^{\frac12} h,
\end{align*}
where all spacetime norms are over $[-h^{-2}T, h^{-2}T]\times\Z$.   The right-hand side here can be made smaller that $\eps h$ by first choosing $\kappa$ large (to ensure \eqref{12:27}) and then choosing $R$ sufficiently large.  Note that by Lemma~\ref{Lem:norm-relation}, monotone convergence, and the $L^2$-boundedness of the Hardy--Littlewood maximal operator $\mathcal M$, we have
$$
\limsup_{R\to\infty}  h^{-1} \bigl\| \varphi_R P\ah(0)\bigr\|_{\ell^2_n}^2
	\lesssim \limsup_{R\to\infty} \bigl\| [1-\chi(\tfrac xR)] [ \mathcal M\psi_0 + \mathcal M\phi_0] \bigr\|_{L^2} = 0.
$$
This completes the proof of \eqref{tightness 3} and so that of \eqref{tightness 2}.

\section{Convergence of the flows}\label{Sec:Convergence}

As a consequence of Theorem~\ref{T:precompactness}, every sequence $h_n\rt 0$ admits a subsequence $h_{n_j}\rt 0$ such that $(\pe^{h_{n_j}},\phi^{h_{n_j}})$ converges in $C([-T,T];L^2_x(\r))$ to some $(\pe,\phi)$.   As a first step toward proving Theorem~\ref{T:main}, we will show that all such subsequential limits are solutions to \eqref{NLS system} with initial data $(\psi_0,\phi_0)$ and satisfy certain spacetime bounds.  For notational simplicity, we will omit the subscripts on $h$ in what follows.
 
\begin{proposition}\label{P:int eq}
Let $\alpha$ be the solution to \eqref{AL} with initial data \eqref{E:initial data} and $h$ satisfying \eqref{small h} and let $\psi^h$ and $\phi^h$ be as in \eqref{psi phi def}. Let $\psi,\phi \in C([-T,T];L^2_x(\R))$ so~that 
\begin{align}\label{convg}
\psi^h \to \psi \qtq{and} \phi^h \to\phi \qtq{in} C([-T,T];L^2_x(\R))
\end{align}
along some sequence of $h\to 0$.  Then $\psi,\phi \in L^6_{t,x}([-T,T]\times\R)$ and for any $|t|\leq T$ we have
\begin{align}
    \pe(t)&=e^{\ui t\D}\pe_0\mp2\ui\int_0^te^{\ui (t-s)\D}\abs{\pe(s)}^2\pe(s)\,ds,\label{duhamel 1}\\
    \phi(t)&=e^{-\ui t\D}\phi_0\pm2\ui\int_0^te^{-\ui (t-s)\D}\abs{\phi(s)}^2\phi(s)\,ds. \label{duhamel 2}
\end{align}
\end{proposition}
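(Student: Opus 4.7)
The plan is compactness plus Strichartz plus a nonresonance (integration-by-parts-in-time) argument. First, using the given $C_tL^2_x$ convergence of $\psi^h$ and $\phi^h$ on $[-T,T]\times\R$, a standard Fubini/diagonal extraction produces a further subsequence along which $\psi^h\to\psi$ and $\phi^h\to\phi$ pointwise a.e. Combining this a.e.\ convergence with the uniform bound $\|\psi^h\|_{L^6_{t,x}}+\|\phi^h\|_{L^6_{t,x}}\lesssim_T 1$ of Proposition~\ref{Prop:strichartz-bounds} and Fatou's lemma then places $\psi,\phi\in L^6_{t,x}([-T,T]\times\R)$.

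Next I would apply Duhamel's formula on the rescaled interval $[0,h^{-2}t]$ (substituting $\tau=h^{-2}s$) and then apply the operators in \eqref{psi phi def'} to obtain
\begin{align*}
\psi^h(t) &= \mathcal R\bigl[e^{ih^{-2}t\DD}\alpha(0)\bigr]-ih^{-2}\int_0^t\mathcal R\bigl[e^{i(h^{-2}t-h^{-2}s)\DD}F(\alpha(h^{-2}s))\bigr]\,ds,
\end{align*}
with an analogous identity for $\phi^h$ absorbing the $e^{4ih^{-2}t}$ phase. On the support $|h\xi|<\tfrac\pi2$ the operator $\mathcal R\,e^{ih^{-2}t\DD}$ has Fourier multiplier $\exp(-ih^{-2}t\cdot 4\sin^2(h\xi/2))$, which converges pointwise to $e^{-it\xi^2}$ with uniform bound~$1$; together with \eqref{data supp}, dominated convergence gives $\mathcal R\,e^{ih^{-2}t\DD}\alpha(0)\to e^{it\Delta}\psi_0$ in $C_tL^2_x$. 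The $\phi^h$-side computation uses $4\sin^2((h\xi+\pi)/2)=4-4\sin^2(h\xi/2)$, so the $-4h^{-2}t$ offset is precisely neutralized by the $e^{4ih^{-2}t}$ phase, yielding the reversed propagator limit $e^{-it\Delta}\phi_0$.

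For the nonlinear term I would decompose
\[
\alpha_n(h^{-2}s)=A_n(s)+(-1)^n e^{-4ih^{-2}s}B_n(s)+E_n(s),
\]
where $A_n\simeq h\,\psi^h(s,nh)$ and $B_n\simeq h\,\phi^h(s,nh)$ come from the two Fourier pieces in \eqref{psi phi def'}, and $E_n$ is an error controlled by the spatial equicontinuity \eqref{equi 2}. Substituting into $\alpha_n\beta_n(\alpha_{n-1}+\alpha_{n+1})$ and exploiting $(-1)^{n\pm 1}=-(-1)^n$ produces the crucial cancellation (the sign change of \eqref{sign flip}) of the ``naive'' mixed interactions $|A|^2B$ and $|B|^2A$; after replacing $A_{n\pm 1}\approx A_n$ and $B_{n\pm 1}\approx B_n$ via equicontinuity (with discretization errors absorbed using Proposition~\ref{Prop:strichartz-bounds}), what remains is
\[
F(\alpha)_n\approx \pm 2\bigl[\,|A_n|^2A_n\,-\,(-1)^n e^{-4ih^{-2}s}|B_n|^2B_n\,+\,(-1)^n e^{4ih^{-2}s}A_n^2\overline{B_n}\,-\,e^{-8ih^{-2}s}\overline{A_n}B_n^2\bigr].
\]
The $\mathcal R$-projection to $|\theta|<\tfrac\pi2$ discards the two $(-1)^n$ terms in the $\psi^h$-Duhamel; symmetrically, the $\phi^h$-Duhamel keeps exactly the $(-1)^n$ terms after its $e^{4ih^{-2}t}$ phase is absorbed. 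The surviving on-site cubics $\pm 2|A|^2A$ and $\mp 2|B|^2B$ then produce $\mp 2i\int_0^t e^{i(t-s)\Delta}|\psi|^2\psi\,ds$ and its reversed-propagator analogue by the standard $L^2_{t,x}\cdot L^6_{t,x}\cdot L^6_{t,x}\hookrightarrow L^{6/5}_{t,x}$ H\"older bound on $|\psi^h|^2\psi^h-|\psi|^2\psi$ followed by dual Strichartz.

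The main obstacle is the two residual oscillatory terms $e^{-8ih^{-2}s}\overline{A}B^2$ and $e^{8ih^{-2}s}A^2\overline{B}$: these are nonresonant in the sense that they oscillate in time at frequency $\sim h^{-2}$, far from the $O(1)$ oscillations of the linear Schr\"odinger flows. The plan is to write $e^{\pm 8ih^{-2}s}=\pm(8ih^{-2})^{-1}\partial_s e^{\pm 8ih^{-2}s}$ and integrate by parts in $s$; the $(8ih^{-2})^{-1}$ gain beats the $h^{-2}$ Duhamel prefactor, producing a net $O(h^2)$ modulo boundary terms and terms in which $\partial_s$ lands on a slowly-varying factor (which, upon using the discrete equation for $\partial_s\alpha$, carry a comparable gain). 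Making this rigorous at the level of cubic expressions whose components are a priori only bounded in $L^6_{t,x}$ is where the $L^6_t\ell^6_n$ and $L^4_t\ell^\infty_n$ bounds of Proposition~\ref{Prop:strichartz-bounds} together with the inflection-point suppression of Proposition~\ref{P:suppression} re-enter the argument, and is presumably the content of Lemma~\ref{L:3}. Passing to the limit in each Duhamel identity then delivers \eqref{duhamel 1}--\eqref{duhamel 2}.
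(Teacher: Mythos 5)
Your overall architecture matches the paper's: Duhamel for $\alpha$ pushed through $\mathcal R$, convergence of the linear term by comparing the multipliers $e^{-\ui 4\sin^2(h\xi/2)h^{-2}t}$ and $e^{-\ui t\xi^2}$, replacement of the \eqref{AL} nonlinearity by an on-site cubic in the two frequency components with the $(-1)^{n\pm1}=-(-1)^n$ sign flip killing the naive $|\psi|^2\phi$ and $|\phi|^2\psi$ couplings, and a nonresonance argument for the residual $e^{\pm 8\ui h^{-2}s}$ terms. (Two small points where the paper differs harmlessly from you: it gets $\psi,\phi\in L^6_{t,x}$ from distributional convergence plus the uniform $L^6$ bound rather than a.e.\ convergence and Fatou, and it first applies an extra sharp cutoff $P_h$ to frequencies $|\xi|<h^{-1/2}$ to each component before forming the on-site cubic, which is what lets it apply Lemma~\ref{Lem:norm-relation} to the cubic expressions; you will need something of this sort too.) The genuine divergence is in the nonresonance step. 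You propose integrating by parts in $s$ using $e^{\pm 8\ui h^{-2}s}=\pm(8\ui h^{-2})^{-1}\partial_s e^{\pm 8\ui h^{-2}s}$; the paper's Lemma~\ref{L:3} instead averages the Duhamel integral with its translate by the half-period $\pi h^2/m$, so that the phase flips sign and the sum reduces to (i) $[1-e^{\ui\pi\DD/m}]\mathcal E_n$, small by the $|\xi|\lesssim h^{-1/2}$ frequency localization, (ii) $\mathcal E_n(s)-\mathcal E_n(s-\pi h^2/m)$, small by the \emph{time-equicontinuity} already established in Theorem~\ref{T:precompactness}, and (iii) two boundary integrals over intervals of length $\pi h^2/m$, handled by H\"older. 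The advantage of the paper's trick is that it never differentiates $\mathcal E_n$ in time: your route forces you to control $\partial_s$ of each factor via the equation, which produces an $O(h^{-1})$ (not $O(1)$) contribution from $h^{-2}\DD$ even on the $|\xi|\le h^{-1/2}$ support, plus quintic terms (with no net gain from the $h^2$ prefactor) that must be closed with the $L^4_t\ell^\infty_n$ bound, plus a boundary term at $s=t$ requiring a fixed-time $\ell^6_n$ bound that is only available via Bernstein from the $P_h$ localization. The numerology does appear to close along your route, but these are exactly the estimates you have deferred to ``presumably the content of Lemma~\ref{L:3}'', and the shift-averaging argument reaches the same conclusion using only quantities already proved.
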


\begin{proof}
As $\psi^h$ and $\phi^h$ converge in $C([-T,T];L^2_x(\R))$, they converge distributionally on $[-T,T];\times\R$.  Consequently, by Proposition~\ref{Prop:strichartz-bounds},  $\psi$ and $\phi$ satisfy
\begin{align}\label{6 bdd}
\|\psi\|_{L^6_{t,x}([-T,T]\times\R)} + \|\phi\|_{L^6_{t,x}([-T,T]\times\R)}\lesssim_T 1.
\end{align}

To prove \eqref{duhamel 1} and \eqref{duhamel 2}, our starting point is the Duhamel formula satisfied by the solution $\ah$ of \eqref{AL}: for any $|t|\leq T$,
\begin{align}\label{ah duh}
    \ah_n\bigl(h^{-2}t\bigr)=e^{\ui h^{-2}t\DD}\ah_n(0)-\tfrac{\ui}{h^2}\int_0^{t} e^{\ui h^{-2}(t-s)\DD}F_n\bigl(\alpha(h^{-2}s)\bigr)\,ds.
\end{align}
We reconstitute $\psi^h(t,x)$ and $\phi^h(t,x)$ from the left-hand side above using \eqref{psi phi def'}.  Employing the notation introduced in \eqref{R def} we may write
$$\psi^h(t) = \mathcal R \bigl[\alpha_n(h^{-2}t)\bigr] \qtq{and}\phi^h(t) = e^{4\ui h^{-2}t} \mathcal R [(-1)^n\alpha_n(h^{-2}t)].$$
Given the hypothesis \eqref{convg}, it thus suffices to prove that
\begin{align}
\lim_{h\to 0} \mathcal R\bigl[\text{RHS}\eqref{ah duh}\bigr]=\text{RHS}\eqref{duhamel 1} \quad \text{in $C([-T,T];L^2_x(\R))$}\label{1}
\end{align}
and 
\begin{align}
&\lim_{h\to 0} e^{4\ui h^{-2}t}\mathcal R\bigl[(-1)^n \text{RHS}\eqref{ah duh}\bigr]=\text{RHS}\eqref{duhamel 2} \quad \text{in $C([-T,T];L^2_x(\R))$}.\label{2}
\end{align}

We first address the convergence of the linear terms on the left-hand sides of \eqref{1} and \eqref{2}.

\begin{lemma}\label{L:1}
Under the hypotheses of Proposition~\ref{P:int eq}, we have
\begin{align*}
&\lim_{h\to 0}\,\Bigl\| \mathcal R\bigl[e^{\ui h^{-2}t\DD}\ah_n(0)\bigr] - e^{\ui t\D}\pe_0   \Bigr\|_{L_t^\infty L_x^2([-T,T]\times\R)}=0,\\
&\lim_{h\to 0}\,\Bigl\|e^{4\ui h^{-2}t}  \mathcal R\bigl[(-1)^n e^{\ui h^{-2}t\DD}\ah_n(0)\bigr]- e^{-\ui t\D}\phi_0   \Bigr\|_{L_t^\infty L_x^2([-T,T]\times\R)}=0.
\end{align*}
\end{lemma}

\begin{proof}
Performing the change of variables $\theta= h\xi$ and using Plancherel and \eqref{data supp}, we estimate
\begin{align*}
&\Bigl\| \mathcal R\bigl[e^{\ui h^{-2}t\DD}\ah_n(0)\bigr] - \ui\tfrac{4\sin^2(\frac\theta2)}{h^2}t\Bigr\} \widehat{\ah}(0,\theta)\,\tfrac{d\theta}{2\pi} - e^{\ui t\D}\pe_0   \Bigr\|_{L_t^\infty L_x^2([-T,T]\times\R)}\\
&\leq \Bigl\| \int\Bigl[\exp\Bigl\{\ui x\xi - \ui\tfrac{4\sin^2(\frac{h\xi}2)}{h^2}t\Bigr\} - \exp\bigl\{ \ui x\xi - \ui t|\xi|^2\bigr\}\Bigr] \widehat{P_{\leq N} \psi_0}(\xi) \,\tfrac{d\xi}{2\pi}\Bigr\|_{L_t^\infty L_x^2([-T,T]\times\R)}\\
&\quad + \Bigl\| e^{\ui t\D}\pe_0 -e^{\ui t\D}P_{\leq N} \psi_0\Bigr\|_{L_t^\infty L_x^2([-T,T]\times\R)}\\
&\lesssim \Big\| \Bigl[\exp\Bigl\{\ui t|\xi|^2 - \ui\tfrac{4\sin^2(\frac{h\xi}2)}{h^2}t\Bigr\} - 1\Bigr]  \widehat{P_{\leq N} \psi_0}(\xi)\Bigr\|_{L_t^\infty L_\xi^2([-T,T]\times\R)} + \bigl\|\widehat{P_{\geq N} \psi_0}\bigr\|_{L_\xi^2}.
\end{align*}
The first claim now follows from the dominated convergence theorem, recalling that $N=h^{-\gamma}$.  The second claim is proved analogously.
\end{proof}

We now turn to the convergence of the nonlinear terms on the left-hand sides of \eqref{1} and \eqref{2}.  We will only present the details for \eqref{1}; the treatment of \eqref{2} is analogous.  To complete the proof of \eqref{duhamel 1}, we must show that
\begin{align}\label{last goal}
\lim_{h\to 0} \mathcal R\Bigl[\tfrac{\ui}{h^2}\int_0^{t} e^{\ui h^{-2}(t-s)\DD}F_n\bigl(\alpha(h^{-2}s)\bigr)\,ds\Bigr] = \mp2\ui\int_0^te^{\ui (t-s)\D}\abs{\pe(s)}^2\pe(s)\,ds
\end{align}
in $C([-T,T];L^2_x(\R))$. Recall that by Lemma~\ref{L:C}, we have $\| \mathcal R\|_{\ell^2_n(\z)\to L_x^2(\R)} \lesssim h^{-\frac12}$.

In view of \eqref{psi phi def}, 
\begin{align}\label{ah id}
\ah_n(h^{-2}t) = h\bigl[\psi^h(t,nh) + (-1)^n e^{-4\ui h^{-2}t}\phi^h(t,nh)\bigr].
\end{align} 

Employing the sharp Fourier cutoff $P_h$ to $|\xi|< h^{-\frac12}$, we define
\begin{align*}
\widetilde\psi^h = P_h\psi^h  \qtq{and} \widetilde\phi^h = P_h\phi^h
\end{align*}
and then
\begin{align}\label{tilde alpha defn}
\widetilde \ah_n(h^{-2}t) = h\bigl[\widetilde\psi^h(t,nh) + (-1)^n e^{-4\ui h^{-2}t}\widetilde\phi^h(t,nh)\bigr].
\end{align}
We will show that as $h\to 0$, the nonlinearity in \eqref{AL} can be replaced by an on-site nonlinearity based solely on this more narrowly Fourier localized sequence.  One reason for introducing this additional localization is to ensure that Lemma~\ref{Lem:norm-relation} may be applied to such nonlinear functions. 

\begin{lemma} \label{L:2} Adopting the notation
\begin{align}\label{sign flip}
\widetilde F_n(t) = 2 h\bigl|\widetilde \alpha_n(h^{-2}t)\bigr|^2 \bigl[\widetilde\psi^h(t,nh) - (-1)^n e^{-4\ui h^{-2}t}\widetilde\phi^h(t,nh)\bigr]
\end{align}
{\upshape(}notice the sign flip relative to \eqref{tilde alpha defn}{\upshape)}, we have
\begin{align*}
\Bigl\| F\bigl(\ah(h^{-2}t)\bigr) - \widetilde F(t) \Bigr\|_{L_t^1\ell_n^2([-T,T]\times\Z)}=o\bigl(h^{\frac52}\bigr) \quad\text{as $h\to 0$.}
\end{align*}
In particular,
$$
\lim_{h\to 0} \, \Bigl\|\mathcal R\Bigl[\tfrac{\ui}{h^2}\int_0^{t} \!\!e^{\ui h^{-2}(t-s)\DD}\Bigl[ F\bigl(\ah(h^{-2}s)\bigr) - \widetilde F(s) \Bigr]\,ds\Bigr]\Bigr\|_{L_t^\infty L_x^2([-T,T]\times\R)}=0.
$$
\end{lemma}

\begin{proof}
In view of \eqref{ah id}, Lemma~\ref{Lem:norm-relation}, and Theorem~\ref{T:precompactness} (specifically \eqref{equi 2}), we may estimate
\begin{align*}
\bigl\|&\ah_n(h^{-2}t) - \widetilde\ah_n(h^{-2}t)\bigr\|_{L_t^\infty \ell_n^2([-T,T]\times\Z)}\\
&\leq h\bigl\| (1-P_h)\psi^h(t,nh)\bigr\|_{L_t^\infty \ell_n^2([-T,T]\times\Z)}+ h\bigl\| (1-P_h)\phi^h(t,nh)\bigr\|_{L_t^\infty \ell_n^2([-T,T]\times\Z)}\\
&\lesssim h^{\frac12}\bigl\| (1-P_h)\psi^h(t)\bigr\|_{L_t^\infty L_x^2([-T,T]\times\R)}+ h^{\frac12}\bigl\| (1-P_h)\phi^h(t)\bigr\|_{L_t^\infty L_x^2([-T,T]\times\R)}\\
&=o\bigl(h^{\frac12}\bigr)  \quad\text{as $h\to 0$.}
\end{align*}
Consequently, by Proposition~\ref{Prop:strichartz-bounds}, we get
\begin{align}\label{part 1}
\Bigl\| F\bigl(\ah(h^{-2}t)\bigr) &- F\bigl(\widetilde\ah(h^{-2}t)\bigr) \Bigr\|_{L_t^1\ell_n^2([-T,T]\times\Z)}\notag\\
&\lesssim \bigl\|\ah(h^{-2}t)\bigr\|_{L_t^4 \ell_n^\infty ([-T, T]\times\Z)}^2 \bigl\|\ah_n(h^{-2}t) - \widetilde\ah_n(h^{-2}t)\bigr\|_{L_t^\infty \ell_n^2 ([-T, T]\times\Z)}\notag\\
&=o\bigl(h^{\frac52}\bigr) \quad\text{as $h\to 0$.}
\end{align}

To continue, we use Lemma~\ref{Lem:norm-relation} and Theorem~\ref{T:precompactness} to estimate
\begin{align*}
\bigl\|&[\widetilde\ah_{n+1}(h^{-2}t)+\widetilde\ah_{n+1}(h^{-2}t)] - 2 h\bigl[\widetilde\psi^h(t,nh) - (-1)^n e^{-4\ui h^{-2}t}\widetilde\phi^h(t,nh)\bigr] \bigr\|_{L_t^\infty \ell_n^2}\\
&\leq 2h\bigl\| \widetilde\psi^h(t,(n+1)h) - \widetilde\psi^h(t,nh)\bigr\|_{L_t^\infty \ell_n^2}
	+ 2h\bigl\| \widetilde\phi^h(t,(n+1)h) - \widetilde\phi^h(t,nh)\bigr\|_{L_t^\infty \ell_n^2}\\
&\leq 2h\bigl\| \psi^h(t,(n+1)h) - \psi^h(t,nh)\bigr\|_{L_t^\infty \ell_n^2}+ 2h\bigl\| \phi^h(t,(n+1)h) - \phi^h(t,nh)\bigr\|_{L_t^\infty \ell_n^2}\\
&\lesssim h^{\frac12}\bigl\| \psi^h(t,\cdot +h) - \psi^h(t)\bigr\|_{L_t^\infty L_x^2}+ h^{\frac12}\bigl\| \phi^h(t,\cdot +h) - \phi^h(t)\bigr\|_{L_t^\infty L_x^2}
=o\bigl(h^{\frac12}\bigr) 
\end{align*}
as $h\to 0$.  Consequently, using Proposition~\ref{Prop:strichartz-bounds} as in \eqref{part 1}, we obtain
\begin{align}\label{part 2}
\Bigl\| F\bigl(\widetilde\ah(h^{-2}t)\bigr)- \widetilde F(t) \Bigr\|_{L_t^1\ell_n^2([-T,T]\times\Z)}=o\bigl(h^{\frac52}\bigr) \quad\text{as $h\to 0$.}
\end{align}
Combining \eqref{part 1} and \eqref{part 2} settles the lemma.
\end{proof}

Expanding out the definition, we have 
\begin{align*}
 \widetilde F_n(t)& = 2h^3\bigl|\widetilde\psi^h(t,nh)\bigr|^2\widetilde\psi^h(t,nh) -2h^3e^{-8\ui h^{-2}t}(\widetilde\phi^h(t,nh))^2\overline{\widetilde\psi^h(t,nh)}\\
 &\quad - 2 (-1)^n h^3 e^{-4\ui h^{-2}t} \Bigl[ \bigl|\widetilde\phi^h(t,nh)\bigr|^2\widetilde\phi^h(t,nh) - e^{8\ui h^{-2}t}(\widetilde\psi^h(t,nh))^2\overline{\widetilde\phi^h(t,nh)}\,\Bigr].
\end{align*}
By looking at the Fourier supports, we see that only the top row of terms contributes to the left-hand side of \eqref{1} and the second row contributes only to that of \eqref{2}.

Our next result shows how the temporal non-resonance of the unexpected terms in the expansion for $\widetilde F_n$ (namely those involving both $\widetilde\psi^h$ and $\widetilde\phi^h$) cause them to drop out in the limit $h\to 0$.  

\begin{lemma}\label{L:3} Let $\mathcal E_n$ denote a cubic polynomial in $\widetilde\psi^h(nh)$, $\widetilde\phi^h(nh)$, and their complex conjugates and let $m$ be a non-zero integer.  Then
\begin{align*}
\Bigl\| \int_0^{t} e^{\ui h^{-2}(t-s)\DD}e^{\ui m h^{-2}s}\mathcal E_n(s)\,ds \Bigr\|_{L_t^\infty\ell^2_n([-T,T]\times\Z)}=o\bigl(h^{-\frac12}\bigr) \quad\text{as $h\to 0$.}
\end{align*}
In particular,
$$
\lim_{h\to 0} \Bigl\|\mathcal R\Bigl[ \tfrac{\ui}{h^2}\int_0^{t} e^{\ui h^{-2}(t-s)\DD}e^{\ui m h^{-2}s} h^3 \mathcal E_n(s)\,ds\Bigr] \Bigr\|_{L_t^\infty L^2_x([-T,T]\times\R)}=0.
$$
\end{lemma}

\begin{proof}
We decompose
\begin{align*}
&\int_0^{t} e^{\ui h^{-2}(t-s)\DD}e^{\ui m h^{-2}s}\mathcal E_n(s)\,ds \\
&= \tfrac12\int_0^{t} e^{\ui h^{-2}(t-s)\DD}e^{\ui m h^{-2}s}\mathcal E_n(s)\,ds- \tfrac12\int_0^{t} e^{\ui h^{-2}(t-s)\DD}e^{\ui m h^{-2}(s+\frac{\pi h^2} m)}\mathcal E_n(s)\,ds\\
& =  \tfrac12 \int_0^{t} e^{\ui m h^{-2}s+\ui h^{-2}(t-s)\DD}\bigl[1-e^{\ui \frac{\pi} m\DD}\bigr]\mathcal E_n(s)\,ds \\
&\quad + \tfrac12 \int_0^{t}  e^{\ui h^{-2}(t+\frac{\pi h^2} m-s)\DD}e^{\ui m h^{-2}s}\bigl[\mathcal E_n(s)- \mathcal E_n\bigl(s-\tfrac{\pi h^2} m\bigr)\bigr]\,ds\\
&\quad +\tfrac12\int_0^{\frac{\pi h^2}m} e^{\ui h^{-2}(t+\frac{\pi h^2} m-s)\DD}e^{\ui m h^{-2}s}\mathcal E_n\bigl(s-\tfrac{\pi h^2} m\bigr)\,ds\\
&\quad -\tfrac12\int_{t}^{t+\frac {\pi h^2} m} e^{\ui h^{-2}(t+\frac{\pi h^2} m-s)\DD}e^{\ui m h^{-2}s}\mathcal E_n\bigl(s-\tfrac{\pi h^2} m\bigr)\,ds.
\end{align*}

Using the discrete Strichartz inequality Proposition~\ref{P:discrete Strichartz} followed by Lemma~\ref{Lem:norm-relation} and Proposition~\ref{Prop:strichartz-bounds}, we estimate the contribution of the last two terms in our decomposition by 
\begin{align*} 
\bigl(\tfrac{\pi h^2} m\bigr)^\frac12 \|\mathcal E_n\|_{L_t^2 \ell_n^2([-T,T]\times\Z)}& \lesssim h\bigl[\|\widetilde\psi^h(nh)\|_{L_t^6\ell_n^6([-T,T]\times\Z)}^3+\|\widetilde\phi^h(nh)\|_{L_t^6\ell_n^6([-T,T]\times\Z)}^3\bigr]\\
&\lesssim h^{\frac12}\bigl[\|\widetilde\psi^h\|_{L_{t,x}^6([-T,T]\times\R)}^3+\|\widetilde\phi^h\|_{L_{t,x}^6([-T,T]\times\R)}^3\bigr]\lesssim h^\frac12,
\end{align*}
which is acceptable.

In view of the Fourier localization imposed on the functions $\widetilde\psi^h$ and $\widetilde\phi^h$, we have $\supp(\widehat{\mathcal E_n})\subset [-3h^{\frac12}, 3h^{\frac12}]$.  Thus, estimating as above, we may bound the contribution of the first term in our decomposition by 
\begin{align*}
\Bigl\|\int_0^{t} &e^{\ui m h^{-2}s+\ui h^{-2}(t-s)\DD}\bigl[1-e^{\ui \frac{\pi} m\DD}\bigr]\mathcal E_n(s)\,ds\Bigr\|_{L_t^\infty\ell^2_n([-T,T]\times\Z)}\\
&\qquad\lesssim \bigl\| \bigl[1-e^{\ui \frac{\pi} m\DD}\bigr]\mathcal E_n \bigr\|_{L_t^1\ell^2_n([-T,T]\times\Z)}\lesssim h \|\mathcal E_n \bigr\|_{L_t^1\ell^2_n([-T,T]\times\Z)}\lesssim h^{\frac12},
\end{align*}
which is acceptable.

This leaves us to estimate the second term in our decomposition, which represents the central question at hand.  The key idea here is to exploit the equicontinuity in time of the functions $\widetilde\psi^h$ and $\widetilde\phi^h$ proved in Theorem~\ref{T:precompactness}.  As $\supp(\widehat{\mathcal E_n})\subset [-3h^{\frac12}, 3h^{\frac12}]$, we may employ the frequency-localized Strichartz estimates from Proposition~\ref{P:loc Strichartz} and scaling to estimate
\begin{align*}
&\Bigl\|\int_0^{t}  e^{\ui h^{-2}(t+\frac{\pi h^2} m-s)\DD}e^{\ui m h^{-2}s}\bigl[\mathcal E_n(s)- \mathcal E_n\bigl(s-\tfrac{\pi h^2}m\bigr)\bigr]\,ds\Bigr\|_{L_t^\infty\ell^2_n}\\
&\lesssim h^\frac13 \bigl\| \mathcal E_n(s)- \mathcal E_n\bigl(s-\tfrac{\pi h^2} m\bigr)\bigr\|_{L^{\frac65}_t \ell_n^{\frac65}([-T,T]\times\Z)}\\
&\lesssim  T^\frac12 h^{-\frac12} \Bigl[\|\psi^h\|_{L_{t,x}^6([-T,T]\times\R)}^2 +\|\phi^h\|_{L_{t,x}^6([-T,T]\times\R)}^2\Bigr]\\
&\qquad\times\Bigl[\bigl\|\psi^h -\psi^h\bigl(\cdot -\tfrac{\pi h^2}m\bigr)\bigr\|_{L_t^\infty L_x^2([-T,T]\times\R)} +\bigl\|\psi^h -\psi^h\bigl(\cdot -\tfrac{\pi h^2}m\bigr)\bigr\|_{L_t^\infty L_x^2([-T,T]\times\R)}\Bigr]\\
&=o\bigl(h^{-\frac12}\bigr) \quad\text{as $h\to 0$}
\end{align*}
where we used Lemma~\ref{Lem:norm-relation}, Proposition~\ref{Prop:strichartz-bounds}, and Theorem~\ref{T:precompactness} in the last two lines.

This completes the proof of the lemma.
\end{proof}

We finally consider the main contribution to the left-hand side of \eqref{last goal}.

\begin{lemma}\label{L:4} We have 
\begin{align*}
\lim_{h\to 0} \mathcal R\Bigl[\tfrac{\ui}{h^2}\int_0^{t} e^{\ui h^{-2}(t-s)\DD}2h^3\bigl(|\widetilde\psi^h|^2\widetilde\psi^h\bigr)(s,nh)\,ds\Bigr] = \mp2\ui\int_0^te^{\ui (t-s)\D}\abs{\pe(s)}^2\pe(s)\,ds
\end{align*}
in $C([-T,T];L^2_x(\R))$.
\end{lemma}

\begin{proof}
Combining hypothesis \eqref{convg} and Lemmas~\ref{L:1} through \ref{L:3}, we know that the term on the left-hand side above converges in $C([-T,T];L^2_x(\R))$.  Thus, it suffices  to identify its limit via duality.  Let $f\in \mathcal S(\R)$ be such that $\widehat f \in C_c^\infty(\R)$.  Note that for $h$ sufficiently small we have $\supp \widehat f \subseteq \{\xi: \, |h\xi|<\frac\pi 2\}$.  Thus, we may use Plancherel to compute
\begin{align*}
\bigl\langle \mathcal R &\Bigl[\tfrac{\ui}{h^2} \int_0^{t} e^{\ui h^{-2}(t-s)\DD}2h^3\bigl(|\widetilde\psi^h|^2\widetilde\psi^h\bigr)(s,nh)\,ds\Bigr] ,f\bigr\rangle\\
&=2h\int_0^{t} \bigl\langle  \widehat{|\widetilde\psi^h|^2\widetilde\psi^h}(s,h\xi), \exp\bigl\{\ui\tfrac{4\sin^2(\frac{h\xi}2)}{h^2}(t-s)\bigr\}
	\widehat{f} (\xi)\bigr\rangle\, ds\\
&=2h\int_0^{t} \sum_n \bigl(|\widetilde\psi^h|^2\widetilde\psi^h\bigr)(s,nh) \overline{\bigl[e^{-i(t-s)\Delta} f\bigr]}(nh)\, ds\\
&\quad + 2h\int_0^{t}\sum_n  \bigl(|\widetilde\psi^h|^2\widetilde\psi^h\bigr)(s,nh) \!\int e^{inh\xi}\Bigl[e^{\ui\tfrac{4\sin^2(\frac{h\xi}2)}{h^2}(t-s)}-e^{\ui|\xi|^2(t-s)}\Bigr]
	\widehat f (\xi)\, d\xi\, ds.
\end{align*}
Using Plancherel, Lemma~\ref{Lem:norm-relation}, and Proposition~\ref{Prop:strichartz-bounds} together with the dominated convergence theorem, we may bound the second summand above by
\begin{align*}
&h\int_0^t\bigl\| \bigl(|\widetilde\psi^h|^2\widetilde\psi^h\bigr)(s,nh) \bigr\|_{\ell_n^2} h^{-\frac12} \Bigl\| \Bigr[e^{\ui[ \xi^2 -\tfrac{4\sin^2(\frac{h\xi}2)}{h^2}](t-s)}-1\Bigr]\widehat f\Bigr\|_{L_\xi^2}\,ds\\
&\qquad\lesssim T^\frac12 \|\widetilde\psi^h\|_{L_{t,x}^6([-T, T]\times\R)}^3 \, o(1) = o(1) \quad\text{as $h\to 0$}.
\end{align*}

Noting that the Fourier support of $|\widetilde\psi^h|^2\widetilde\psi^h$ is contained in $[-3h^{-\frac12}, 3h^{-\frac12}]$ and invoking Lemma~\ref{L:sums to int}, we see that the first summand equals
\begin{align*}
2\int_0^{t} \int \bigl(|\widetilde\psi^h|^2\widetilde\psi^h\bigr)(s,x) \overline{\bigl[e^{-i(t-s)\Delta} f\bigr]}(x)\, dx\, ds =2\int_0^{t} \bigl\langle e^{i(t-s)\Delta}\bigl(|\widetilde\psi^h|^2\widetilde\psi^h\bigr)(s), f\bigr\rangle \, ds.
\end{align*}
That this converges to the desired limit as $h\to 0$ follows readily from the Strichartz inequality Proposition~\ref{P:Strichartz},  hypothesis \eqref{convg}, Theorem~\ref{T:precompactness}, Proposition~\ref{Prop:strichartz-bounds} and \eqref{6 bdd}:
\begin{align*}
\Bigl\|  \int_0^{t} \bigl\langle e^{i(t-s)\Delta}&\bigl[|\widetilde\psi^h|^2\widetilde\psi^h - |\psi|^2\psi\bigr](s), f\bigr\rangle \, ds \Bigr\|_{L_t^\infty ([-T,T])}\\
&\lesssim T^{\frac12}\|\widetilde\psi^h - \psi\|_{L_t^\infty L_x^2} \bigl[\|\widetilde\psi^h\|_{L_{t,x}^6}^2 +\|\psi\|_{L_{t,x}^6}^2\bigr] \|f\|_{L_x^2}\\
&\lesssim_T \|\psi^h - \psi\|_{L_t^\infty L_x^2}+ \|P_{|\xi|\geq h^{-\frac12}}\psi^h \|_{L_t^\infty L_x^2} = o(1)  \quad\text{as $h\to 0$},
\end{align*}
where all spacetime norms are taken over $[-T,T]\times\R$.
\end{proof}

The proof of Proposition~\ref{P:int eq} is now complete.  Indeed, Lemmas~\ref{L:1} though \ref{L:4} show that $\psi$ satisfies \eqref{duhamel 1}.  The proof that $\phi$ satisfies \eqref{duhamel 2} follows from parallel arguments. \end{proof}

We are finally ready to prove our main result:

\begin{proof}[Proof of Theorem~\ref{T:main}]  As noted at the beginning of this section, Theorem~~\ref{T:precompactness} guarantees that every sequence $h\rt 0$ admits a subsequence so that both $\pe^{h}$ and $\phi^{h}$ converge in $C([-T,T];L^2_x(\r))$.   By Proposition~\ref{P:int eq}, the limiting functions lie in $L^6_{t,x}$ and solve the integral equations \eqref{duhamel 1} and \eqref{duhamel 2}.

These integral equations admit only one solution in $C_t L^2_x \cap L^6_{t,x}$ as is easily shown by contraction mapping using the estimates recalled in Proposition~\ref{P:Strichartz}.   Originating in \cite{MR915266}, this is now the textbook approach to the construction of solutions to \eqref{NLS system}.

As all subsequential limits agree, it follows that the original sequences $\pe^{h}$ and $\phi^{h}$ converge as $h\to0$ without passing to subsequences at all.  Moreover, as noted above, the resulting limits are the unique solutions to the evolutions \eqref{NLS system} with initial data $\psi_0,\phi_0$.
\end{proof}


\begin{thebibliography}{10}

\bibitem{MR0377223}
M. J. Ablowitz and J. F. Ladik,
\emph{Nonlinear differential-difference equations.} 
J. Mathematical Phys. \textbf{16} (1975), 598--603. 

\bibitem{MR4049393}
Y. Angelopoulos, R. Killip, and M. Visan,
\emph{Invariant measures for integrable spin chains and an integrable discrete nonlinear Schr\"odinger equation.}
SIAM J. Math. Anal. \textbf{52} (2020), no. 1, 135--163.

\bibitem{MR3366652}
D.~Bambusi, T.~Kappeler, and T.~Paul,
\emph{From Toda to KdV.}
Nonlinearity \textbf{28} (2015), no. 7, 2461--2496.

\bibitem{MR3327553}
D.~Bambusi, T.~Kappeler, T.~Paul,
\emph{Dynamics of periodic Toda chains with a large number of particles.} 
J. Differential Equations \textbf{258} (2015), no. 12, 4209--4274.  


\bibitem{GV}
J. Ginibre and G. Velo,
\emph{Smoothing properties and retarded estimates for some dispersive evolution equations.}
Comm. Math. Phys. \textbf{144} (1992), no. 1, 163--188.

\bibitem{Harrop-Griffiths.Killip.Visan2021}
B.~Harrop-Griffiths, R.~Killip, and M.~Visan,
\emph{Microscopic conservation laws for integrable lattice models.}
Monatsh. Math. \textbf{196} (2021), no. 3m 477--504. 
  
\bibitem{MR1308108}
B. M. Herbst, F. V\'aradi, and M. J. Ablowitz,
\emph{Symplectic methods for the nonlinear Schr\"odinger equation.}
In ``Solitons, nonlinear wave equations and computation (New Brunswick, NJ, 1992)''. 
Math. Comput. Simulation \textbf{37} (1994), no. 4--5, 353--369.

\bibitem{MR3939333} 
Y. Hong and C. Yang,
\emph{Strong convergence for discrete nonlinear Schr\"odinger equations in the continuum limit.}
SIAM J. Math. Anal. \textbf{51} (2019), no. 2, 1297--1320. 

\bibitem{MR2485456}
L. I. Ignat and E. Zuazua,
\emph{Numerical dispersive schemes for the nonlinear Schr\"odinger equation.}
SIAM Journal of Numerical Analysis \textbf{47} (2009), no. 2, 1366--1390. 

\bibitem{MR2980459}
L. I. Ignat and E. Zuazua,
\emph{Convergence rates for dispersive approximation schemes to nonlinear Schr\"odinger equations.}
J. Math. Pures Appl. (9) \textbf{98} (2012), no. 5, 479--517.


\bibitem{Ishimori}
Y. Ishimori,
\emph{An integrable classical spin chain.}
J. Phys. Soc. Jpn. \textbf{51} (1982), no. 11, 3417--3418.


\bibitem{KeelTao} 
M. Keel and T. Tao,
\emph{Endpoint Strichartz estimates.}
Amer. J. Math. \textbf{120} (1998), no. 5, 955--980.

\bibitem{Killip.Visan.Zhang2018}
R.~Killip, M.~Visan, and X.~Zhang,
\emph{Low regularity conservation laws for integrable PDE.}
Geom. Funct. Anal. \textbf{28} (2018), 1062--1090.



\bibitem{MR3009717}
K. Kirkpatrick, E. Lenzmann, and G. Staffilani,
\emph{On the continuum limit for discrete NLS with long-range lattice interactions.}
Comm. Math. Phys. \textbf{317} (2013), no. 3, 563--591. 
  
\bibitem{PP}
M. Plancherel and G. P\'olya, \emph{Fonctions entieres et integrales de fourier multiples.}
Comment. Math. Helv. \textbf{10} (1937), no. 1, 110--163.

\bibitem{Riesz1928}
M. Riesz, \emph{Sur les fonctions conjugu\'ees.}
Math. Z. \textbf{27} (1928), no. 1, 218--244.

\bibitem{Riesz1933}
M.~Riesz, \emph{Sur les ensembles compacts de fonctions sommable.} Acta Sci.Math. (Szeged) \textbf{6} (1933), 136--142.
  
\bibitem{MR1870156}
G.~Schneider and C.~Wayne, \emph{Counter-propagating waves on fluid surfaces and the continuum limit of the Fermi-Pasta-Ulam model.}
International Conference on Differential Equations, Vol. 1, 2 (Berlin, 1999), 390--404, World Sci. Publ., River Edge, NJ, 2000. 

\bibitem{Stefanov.Kevrekidis2005}
A.~Stefanov and P.~G. Kevrekidis, \emph{Asymptotic behaviour of small solutions for the discrete nonlinear Schr\"odinger and Klein-Gordon equations.} Nonlinearity \textbf{18} (2005), 1841--1857.

\bibitem{MR915266}
Y.~Tsutsumi, \emph{$L^2$-solutions for nonlinear Schr\"odinger equations and nonlinear groups.} Funkcial. Ekvac.  \textbf{30} (1987), no. 1, 115--125.


\end{thebibliography}
\end{document}